\documentclass[11pt]{article}

\usepackage{amsmath, amsthm, amssymb, amsfonts, mathtools}
\usepackage{mathrsfs}
\usepackage{hyperref}
\usepackage{enumitem}
\usepackage{graphicx}
\usepackage{tikz}
\usepackage{bbm}
\usepackage{geometry}
\usepackage{graphicx}

\newtheorem{theorem}{Theorem}[section]
\newtheorem{lemma}[theorem]{Lemma}
\newtheorem{proposition}[theorem]{Proposition}

\theoremstyle{definition}
\newtheorem{definition}[theorem]{Definition}

\newcommand{\R}{\mathbb{R}}
\newcommand{\Z}{\mathbb{Z}}
\newcommand{\OO}{\mathbb{O}}
\newcommand{\HH}{\mathbb{H}}
\newcommand{\F}{\mathbb{F}}
\newcommand{\D}{\mathbb{D}}
\newcommand{\N}{\mathbb{N}}

\newcommand{\E}{\mathbb{E}}
\newcommand{\Prob}{\mathbb{P}}

\newcommand{\Zfull}{Z^{\mathrm{full}}}
\newcommand{\Zhalf}{Z^{\mathrm{half}}}
\newcommand{\Zin}{Z^{\mathrm{in}}}
\newcommand{\Zexit}{Z^{\mathrm{exit}}}
\newcommand{\Zb}{Z^{\mathrm{b}}}

\title{Directed landscape convergence for the half-space log-gamma polymer
$N^{2/3+\delta}$ away from the boundary}
\author{Xinyi Zhang}
\begin{document}

\maketitle
\begin{abstract}
We prove that the free energy of the half–space log–gamma polymer $N^{2/3+\delta}$ away from the boundary in the non–attractive regime converges to the directed landscape. Based on the convergence of the full-space log-gamma free energy to the directed landscape, we couple the full-space and the half-space model and prove that the dominant contributions to free energy in both cases
come from paths that remain confined to a transversal window of order $N^{2/3}$. The result follows from three main inputs: a deterministic
leading-order gap between paths that deviate
from the transversal window on the $N^{2/3+\delta}$ scale and those within the
typical $N^{2/3}$ scale; uniform exponential upper-tail bounds for half-space free energies with general slope; and existing full-space
estimates on constrained and exiting free energies from
\cite{basu2024temporal}. 
\end{abstract}

\section{Introduction}
The Kardar--Parisi--Zhang (KPZ) universality class describes the
large-scale behavior of a broad family of random growth models and
directed polymers in $1+1$ dimensions. Over the past two decades, significant progress has been made toward understanding the KPZ universality through analysis of integrable models and their scaling limits. Recently, Dauvergne, Ortmann and Vir\'ag in \cite{dauvergne2022directed} constructed central objects in the KPZ universality class: the Airy sheets and the directed landscape, which are expected to be the universal scaling limits of a broad range of KPZ growth models.

For zero temperature models, convergence to the Airy sheets and the directed landscape has been established for several integrable last passage percolation models, including the Poissonian, exponential, and geometric cases; see, for instance, \cite{dauvergne2021scaling,dauvergne2023uniform}. Establishing such results is considerably more delicate for positive temperature models: exact formulas typically yield only one-point asymptotics, and the direct metric composition structure present in zero-temperature models is no longer available. Nevertheless, recent progress shows that positive-temperature systems can exhibit the same universal limit. Notable examples include the works \cite{wu2023kpz,aggarwal2024scaling,zhang2025convergence}.

New phenomena arise when boundary interactions are introduced. In particular,  \cite{barraquand2023identity} proves that the half-space point-to-line free energy of the log-gamma polymer undergoes phase transitions depending on the strength of the boundary parameter. On the other hand, \cite{AConvergence} proves that the half-space exponential last passage percolation sufficiently away from the boundary converges to the directed landscape when the boundary is not attractive. The present work investigates the analogous question for the positive-temperature log-gamma polymer.

We begin by using the same coupling framework as in \cite{AConvergence} and establish an analogous leading-order gap, via the shape function, between paths that deviate from the transversal window on the $N^{2/3+\delta}$ scale and those confined to the typical $N^{2/3}$ scale. However, unlike the zero-temperature setting, where the last passage value is determined by a single geodesic, the positive-temperature polymer free energy aggregates contributions from exponentially many up-right lattice paths. Consequently, the barrier-event argument of \cite{AConvergence}, which controls all the geodesics by ensuring a single geodesic stay away from the boundary, is no longer applicable. Instead, we need uniform exponential upper-tail bounds for the half-space free energy. To obtain these, we use the distributional identity of \cite{barraquand2023identity} to relate the half-space model to the full-space model, existing tail bounds for the homogeneous full-space model \cite{barraquand2021fluctuations}, and more shape function analysis. Combined with existing estimates for constrained and exiting free energy from \cite{basu2024temporal}, these yield the desired convergence result.

\subsection{Model Definition}\label{sec:model}
Let $\prec$ denote the partial order on $\Z^2$ given by
$(x_1,y_1)\prec(x_2,y_2)$ if $x_1\le x_2$ and $y_1\le y_2$. For any subset $\OO \subset \Z^2$, let $\Pi_{\OO}[(x_1,y_1) \rightarrow (x_2,y_2)]$ denote the set of up-right lattice paths from $(x_1, y_1)$ to $(x_2,y_2)$ contained in~$\OO$ and abbreviate $\Pi_{\Z^2}[(x_1,y_1) \rightarrow (x_2,y_2)]$ as $\Pi[(x_1,y_1) \rightarrow (x_2,y_2)]$ .

\begin{definition}[Full-space log-gamma polymer]
Let $\{\omega_{i,j}\}_{(i,j) \in \Z^2}$ be i.i.d.\ inverse-gamma random variables with parameter~$2\alpha>0$. For $(x_1,y_1)\prec(x_2,y_2)$, define the full-space log-gamma partition function
\[
Z^{\mathrm{full}}(x_1,y_1;x_2,y_2)
:=\sum_{\pi \in \Pi[(x_1,y_1)\to(x_2,y_2)]} 
\prod_{(i,j)\in\pi} \omega_{i,j},
\]
and set $Z^{\mathrm{full}}(x_1,y_1;x_2,y_2)=0$ if the sum is empty.
The associated full-space free energy is
\[
\log Z^{\mathrm{full}}(x_1,y_1;x_2,y_2) = \log \sum_{\pi \in \Pi[(x_1,y_1)\to(x_2,y_2)]} 
\prod_{(i,j)\in\pi} \omega_{i,j}.
\]
\end{definition}

\begin{definition}[Half-space log-gamma polymer]
Let
\[
\HH \;=\; \{(i,j)\in\Z^2 : i\ge j\} ; \quad  \F=\{(i,j)\in\Z^2 : i > j\}; \quad \D = \{(i,i): i \in \Z \}.
\]
denote the discrete half-space.  
Let $\{\omega_{i,j}: i>j\}$ be i.i.d. inverse-gamma random variable with parameter~$2\alpha >0$, and let $\{\omega_{i,i}\}_{i\in\Z}$ be  i.i.d. inverse-gamma random variable with parameter~$\alpha + \theta > 0$, independent of $\{\omega_{i,j}\}_{i>j}$.

For $(x_1,y_1)\prec(x_2,y_2) \in \HH$, define the half-space point-to-point partition function
\[
Z^{\mathrm{half}}(x_1,y_1;x_2,y_2)
\;=\;\sum_{\pi \in \Pi_{\HH}[(x_1,y_1)\to(x_2,y_2)]}
\prod_{(i,j)\in\pi} \omega_{i,j},
\]
and set $Z^{\mathrm{half}}(x_1,y_1;x_2,y_2)=0$ if the sum is empty. The associated half-space free energy is
\[
\log Z^{\mathrm{half}}(x_1,y_1;x_2,y_2) = \log \sum_{\pi \in \Pi_{\HH}[(x_1,y_1)\to(x_2,y_2)]}
\prod_{(i,j)\in\pi} \omega_{i,j}.
\]
\end{definition}

\subsection{Main Results}
Let $C(\R^4_+, \R)$ denote the space of continuous function on $\R^4_+ = \{(x,s;y,t) \in \R^4: t>s\}$ endowed with the topology of uniform convergence over compact sets. Let $q, \sigma_p$ be constants that only depend on $\alpha$, as defined in \cite[Definition 3.1]{zhang2025convergence} and \cite[Theorem 3.3]{zhang2025convergence}.

\begin{theorem}
\label{thm_half_loggamma_to_DL}
We define the following scaling operators $\overline{x}_N = \lfloor N^{2/3}xq^{-2} \rfloor + 1$, $t_N = \lfloor 2Nt \rfloor$. Fix any $\delta>0$ and define the scaled half-space log-gamma polymer free energy away from the boundary,
$h_{\mathrm{half}}^{N, \delta}$, as a random function on $\R_+^4$ by
 \begin{equation}
 \begin{aligned}
     h_{\mathrm{half}}^{N,\delta}(x,s;y,t) := &\frac{q\sigma_p}{\sqrt{2}N^{1/3}}\bigg[ \log Z^{\mathrm{half}}(\overline{x}_N + s_N+\lfloor N^{2/3+\delta}\rfloor, s_N;\overline{y}_N + t_N + \lfloor N^{2/3+\delta}\rfloor -1, t_N-1)\\
     &-p\left( \overline{y}_N - \overline{x}_N + 4Nt-4Ns\right)\bigg].
 \end{aligned}
    \end{equation}
Then the continuous linear interpolation of $h_{\mathrm{half}}^{N,\delta}$ converges to the directed landscape $\mathcal{L}$ in distribution uniformly over compact subsets of $\R_+^4$.
\end{theorem}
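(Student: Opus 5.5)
The plan is to couple the half-space environment with a full-space environment and to show that, at the scale $N^{2/3+\delta}$ away from the diagonal, the half-space and full-space free energies agree up to $o(N^{1/3})$ errors. Then the known full-space convergence of $\log Z^{\mathrm{full}}$ to the directed landscape, \cite{zhang2025convergence}, transfers to $h_{\mathrm{half}}^{N,\delta}$.

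\textbf{Coupling.} Set $\omega^{\mathrm{full}}_{i,j}=\omega_{i,j}$ for $i>j$, and take all other full-space weights (including those on $\D$) independent. Write $u=(\overline{x}_N + s_N+\lfloor N^{2/3+\delta}\rfloor, s_N)$ and $v$ for the corresponding endpoint. Let $R$ be the strip
\[
R=\{(i,j): |\,(i-j)-\lfloor N^{2/3+\delta}\rfloor\,|\le \tfrac12 N^{2/3+\delta}\},
\]
which lies inside $\F$. Paths confined to $R$ have identical weights in the two models. So on each compact set $K\subset\R^4_+$ we decompose
\[
Z^{\mathrm{half}}(u;v)=Z_R(u;v)+\Zexit^{\mathrm{half}}(u;v),\qquad Z^{\mathrm{full}}(u;v)=Z_R(u;v)+\Zexit^{\mathrm{full}}(u;v),
\]
where the exit terms collect the paths that leave $R$. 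It then suffices to show that, uniformly over the $O(N^{2/3})\times O(N^{2/3})$ grid of lattice endpoints coming from $K$, with probability $1-o(1)$ we have
\[
\Zexit^{\bullet}\le e^{-cN^{1/3+\epsilon}}\,Z_R \quad\text{for } \bullet\in\{\mathrm{half},\mathrm{full}\}.
\]
Both $\log$-partition functions then equal $\log Z_R+o(1)$ and hence differ by $o(N^{1/3})$.

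\textbf{Lower bound on $Z_R$.} By the full-space results on constrained free energies in \cite{basu2024temporal}, $\log Z_R \ge \text{shape}(u,v) - CN^{1/3}$ with high probability, uniformly over the grid. Here the shape term is $p\cdot(\text{length})$, up to a curvature correction of order $N^{1/3}$. The corresponding full-space exit estimate from \cite{basu2024temporal} directly gives the required bound on $\Zexit^{\mathrm{full}}$.

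\textbf{Half-space exit term.} Decompose each path leaving $R$ according to the first point $w$ at which it crosses the inner boundary line $i-j=\tfrac12 N^{2/3+\delta}$, together with the corresponding last crossing point $w'$. Then
\[
\Zexit^{\mathrm{half}}\le \sum_{w,w'} Z^{\mathrm{full}}(u;w)\,Z^{\mathrm{half}}(w;w')\,Z^{\mathrm{full}}(w';v),
\]
plus a similar term for crossing the outer boundary, which is purely full-space. The deterministic step is a strict concavity computation for the log-gamma shape function $f(\cdot)$ along the two-leg path $u\to w\to w'\to v$. Because the path deviates transversally by order $N^{2/3+\delta}$ over a longitudinal length of order $N$, the shape along it falls short of the direct shape by at least $cN^{1/3+2\delta}$. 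Here one must use the non-attractive condition $\theta\ge 0$, which ensures that the half-space shape function with general slope is at most the full-space one, so the boundary does not increase the leading order. It then remains to show that each factor exceeds its shape by at most $N^{1/3+\delta}$, with probability $1-e^{-cN^{\delta}}$. A union bound over the polynomially many pairs $(w,w')$ closes the argument. The full-space factors are handled by the known upper-tail bounds of \cite{barraquand2021fluctuations}, applied on a range of slopes bounded away from degenerate.

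\textbf{Main obstacle: half-space upper tail.} I expect the main obstacle to be the uniform exponential upper-tail bound for $\log Z^{\mathrm{half}}(w;w')$ over general slopes, including very short or very steep segments. My first attempt is to use the distributional identity of \cite{barraquand2023identity}, which relates half-space point-to-point partition functions to full-space ones with modified boundary parameters. This converts the half-space upper tail into an upper tail for a (possibly inhomogeneous) full-space partition function. I would then bound that stochastically by a homogeneous full-space model, via monotonicity of the inverse-gamma weights in their parameter when $\theta\ge0$, and apply \cite{barraquand2021fluctuations}. Degenerate slopes, where the shape function's curvature blows up, I would treat separately by crude moment bounds, for example $\E Z$ compared with $e^{\text{shape}}$ using the Markov inequality. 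Finally, the $o(N^{1/3})$ comparison is uniform on the grid, the scaling and centering in $h_{\mathrm{half}}^{N,\delta}$ match those of the full-space theorem after translating by $\lfloor N^{2/3+\delta}\rfloor$, and full-space translation invariance holds. Together these give convergence of the linear interpolation to $\mathcal{L}$.
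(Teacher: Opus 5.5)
\textbf{Gap: the half-space upper tail for $0\le\theta<\alpha$.} Your architecture matches the paper's: coupling on $\{i>j\}$, confined versus exiting paths, the constrained and exit estimates of \cite{basu2024temporal}, a deterministic shape gap of order $N^{1/3+2\delta}$, and a half-space upper tail from \cite{barraquand2023identity}, \cite{barraquand2021fluctuations} plus a moment bound for thin rectangles. However, the step you single out as the main obstacle is not closed by what you propose.

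The Barraquand--Wang identity compares a half-space point-to-\emph{line} partition function started on $\D$ with a full-space point-to-point one; the half-space point-to-point quantity is then dominated as a sub-sum. It produces a full-space model whose first column has weights $\mathrm{Gamma}^{-1}(\alpha+\theta)$ and whose bulk has weights $\mathrm{Gamma}^{-1}(2\alpha)$. Inverse-gamma weights decrease stochastically in their parameter. So for $0\le\theta<\alpha$ the column weights are stochastically \emph{larger} than the bulk ones, and monotonicity gives no domination by the homogeneous $\mathrm{Gamma}^{-1}(2\alpha)$ model. Dominating instead by a homogeneous model with parameter $\alpha+\theta$ changes the leading-order shape and destroys the $N^{1/3+2\delta}$ gap. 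This regime, including the critical case $\theta=0$, is exactly where the work lies, and your plan has no mechanism for it. Crude moment bounds lose a constant in the leading order and cannot help near diagonal slopes.

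\textbf{The missing idea.} Use monotonicity only to reduce to $\theta=0$. Then decompose by the number $k$ of steps taken in the special column:
$Z_0(N,T)=\sum_{k=1}^T\bigl(\prod_{j\le k}W_{1,j}\bigr)\Zfull(2,k;N,T)$.
Because $f'(x)=\Psi_0(g^{-1}(x))$ and $g^{-1}(x)\le\alpha-cN^{-1/3+\delta}$ whenever $x\le 1-\kappa N^{-1/3+\delta}$, one gets
$-Nf(T/N)\ge -Nf((T-k)/N)-k\Psi_0(\alpha)+C_0kN^{-1/3+\delta}$.
Each column step therefore costs a \emph{linear} amount. By Bernstein this beats the $\sqrt{k}$ fluctuations of $\sum_{j\le k}\log W_{1,j}$, giving $e^{-cN^{2\delta}}$ once $k\ge N^{2/3}$. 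The remaining homogeneous factor is handled by \cite{barraquand2021fluctuations}, or, when $T-k\le\xi N$, by the moment bound together with a separate shape inequality. This is why the slope must stay $\kappa N^{-1/3+\delta}$ away from $1$: at slope $1$ with $\theta=0$ the column is critical and there is no linear gap.

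\textbf{Your middle factor.} $\Zhalf(w;w')$ has both endpoints off $\D$, so the identity does not apply to it as it stands. You would have to decompose at a diagonal point anyway. The paper does this from the start:
$\Zb(u;v)\le\sum_i(\omega^{\partial}_{i,i})^{-1}\Zhalf(u;(i,i))\Zhalf((i,i);v)$.
This automatically gives slopes at most $1-\kappa N^{-1/3+\delta}$, since the off-diagonal endpoint lies at distance of order $N^{2/3+\delta}$ from $\D$ over a length $O(N)$.

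\textbf{Minor points.} Your three-factor bound double counts the weights at $w$ and $w'$, so divide by them. Also, take the lower bound on $Z_R$ at deviation $N^{1/3+\delta}$ rather than $CN^{1/3}$, so that it survives the union bound.
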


\section{Coupling Between Full-Space and Half-Space Log-Gamma Polymers}
\label{sec:coupling}

We couple the full-space model and the half-space model in the following way. Fix $\alpha >0$ and $\theta \geq 0$. Let $(\Omega,\mathcal{F},\Prob)$ be a probability space supporting two independent families of random variables
\[
\{\omega^{\mathrm{bulk}}_{i,j}\}_{(i,j)\in\Z^2}
\quad\text{and}\quad
\{\omega^{\partial}_{i,i}\}_{i\in\Z},
\]
where the $\{\omega^{\mathrm{bulk}}_{i,j}\}$ are i.i.d. inverse-gamma random variables with parameter~$2\alpha$, and the $\{\omega^{\partial}_{i,i}\}$ are i.i.d.\ inverse-gamma random variables with parameter~$\alpha + \theta$ in the non-attractive regime (i.e. $\theta \geq 0$), independent of $\{\omega^{\mathrm{bulk}}_{i,j}\}$. We define the full-space free energy $h_{\mathrm{full}}^N$ using the bulk weights $\{\omega^{\mathrm{bulk}}_{i,j}\}_{(i,j)\in\Z^2}$ in the sense of Definition~\ref{sec:model}, and define the half-space free energy $h_{\mathrm{half}}^{N, \delta}$ using the bulk weights $\{\omega^{\mathrm{bulk}}_{i,j}\}_{i>j}$ together with the boundary weights $\{\omega^{\partial}_{i,i}\}_{i\in\Z}$.

We will use the full-space directed landscape convergence \cite[Theorem 1.6]{zhang2025convergence} as the starting point of our analysis.

\begin{theorem}
\label{thm_loggamma_full_to_DL}
We define the following scaling operators $\overline{x}_N = \lfloor N^{2/3}xq^{-2} \rfloor + 1$, $t_N = \lfloor 2Nt \rfloor$. We define the log-gamma landscape $h_{\mathrm{full}}^N$ as the following random function on $\R^4_+$
    \begin{equation}
        h_{\mathrm{full}}^N(x,s;y,t) := \frac{q\sigma_p}{\sqrt{2}N^{1/3}}\left[ \log Z^{\mathrm{full}}(\overline{x}_N + s_N, s_N;\overline{y}_N + t_N-1, t_N-1) -p\left( \overline{y}_N - \overline{x}_N + 4Nt-4Ns\right)\right].
    \end{equation}
    The continuous linear interpolation of $h_{\mathrm{full}}^N$ converges to the directed landscape $\mathcal{L}$ in distribution uniformly over compact subsets of $\R_+^4$.
\end{theorem}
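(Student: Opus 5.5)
This statement is not proved afresh here: it is a restatement of the full-space directed landscape convergence \cite[Theorem 1.6]{zhang2025convergence}, and the plan is to import it directly. The only work is checking that the objects in Theorem~\ref{thm_loggamma_full_to_DL} coincide, in law and in normalization, with those in \cite{zhang2025convergence}. I would carry this out in three checks.

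\textbf{First check: the law of $h_{\mathrm{full}}^N$.} Under the coupling of Section~\ref{sec:coupling}, $h_{\mathrm{full}}^N$ is built solely from the bulk family $\{\omega^{\mathrm{bulk}}_{i,j}\}_{(i,j)\in\Z^2}$. This family is i.i.d.\ inverse-gamma with parameter $2\alpha$ over all of $\Z^2$. Hence the joint law of the random field $(x,s;y,t)\mapsto h_{\mathrm{full}}^N(x,s;y,t)$ is exactly that of the full-space log-gamma landscape of \cite{zhang2025convergence}, with that paper's inverse-gamma parameter identified with $2\alpha$. Since convergence in distribution depends only on the law, the coupling does not interfere.

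\textbf{Second check: the scaling.} I would match the scaling operators $\overline{x}_N=\lfloor N^{2/3}xq^{-2}\rfloor+1$ and $t_N=\lfloor 2Nt\rfloor$ against those of \cite{zhang2025convergence}. This includes the endpoint convention $(\overline{y}_N+t_N-1,\,t_N-1)$, the centering $p(\overline{y}_N-\overline{x}_N+4Nt-4Ns)$, and the prefactor $q\sigma_p/(\sqrt2 N^{1/3})$. The constants $q,\sigma_p,p$ should be exactly those of \cite[Definition 3.1, Theorem 3.3]{zhang2025convergence}, evaluated at the corresponding parameter. Any discrepancy there would be a deterministic relabeling. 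For example, $p$ might be expressed through digamma functions of $\alpha$ in one convention and of $2\alpha$ in another. Such a relabeling is absorbed by a reparametrization and does not affect the conclusion.

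\textbf{Third check: the mode of convergence.} The topology is the same: uniform convergence on compact subsets of $\R^4_+$ of the continuous linear interpolation. Once the three checks are done, the conclusion follows verbatim from \cite[Theorem 1.6]{zhang2025convergence}.

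The only potential obstacle is bookkeeping of conventions. The main candidates are whether the cited theorem indexes the inverse-gamma parameter as $2\alpha$ or as $\alpha$, and whether its diagonal shift $s_N$ in the first coordinate matches the shift used here. If there is a mismatch, I would first try a deterministic lattice translation $(i,j)\mapsto(i+c,j+c)$. By stationarity of the i.i.d.\ field this leaves the law invariant. It changes the arguments of $\log Z^{\mathrm{full}}$ by $O(1)$, which is negligible after dividing by $N^{1/3}$, using the modulus-of-continuity control implicit in the cited uniform convergence.
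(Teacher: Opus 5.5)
Your proposal is correct and matches the paper. Theorem~\ref{thm_loggamma_full_to_DL} is not proved here but imported directly from \cite[Theorem 1.6]{zhang2025convergence}, with $q,\sigma_p,p$ taken from that work, so the only content is the convention check you describe. (Minor point: a diagonal translation $(i,j)\mapsto(i+c,j+c)$ preserves the law of the whole field exactly, so it needs no negligibility argument; that argument is only required for an off-by-one mismatch in a single endpoint coordinate.)
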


Observe that if we define the scaled full-space log-gamma polymer free energy away from the boundary as
\begin{equation}\label{eq:Lfullk_loggamma_def}
\begin{split}
h_{\mathrm{full}}^{N, \delta}(x,s;y,t)
=&\frac{q\sigma_p}{\sqrt{2}N^{1/3}}\bigg[ \log Z^{\mathrm{full}}(\overline{x}_N + s_N+\lfloor N^{2/3+\delta}\rfloor, s_N;\overline{y}_N + t_N + \lfloor N^{2/3+\delta}\rfloor -1, t_N-1)\\
     &-p\left( \overline{y}_N - \overline{x}_N + 4Nt-4Ns\right)\bigg]
\end{split}
\end{equation}
then by Theorem~\ref{thm_loggamma_full_to_DL} and the translation invariance in distribution of the log-gamma polymer, we have that $h_{\mathrm{full}}^{N,\delta}$ converges to the directed landscape in distribution uniformly over compact subsets of $\R_+^4$.

The purpose of this coupling is to transfer the convergence from the full-space model to the half-space model away from the boundary using a comparison estimate between the two scaled free energies. The key input is the following proposition.

\begin{proposition}\label{prop:diff_loggamma}
    Fix any $b \in \N$ and $\epsilon > 0$. Let $Q_b = [-b,b]^4 \cap \{(x,s;y,t) \in \R_+^4: t-s > b^{-1}\}$ and $\R_{+}^4(N) = \{(x,s;y,t): N^{2/3}xq^{-2}, N^{2/3}yq^{-2}, 2Ns,2Nt \in \Z\}$. There exists constants $C,N_0,c > 0$ that depends only on $b$ and $\alpha$ such that for any $N \geq N_0$ and any $(x,s;y,t) \in Q_b \cap \R_{+}^4(N)$
\begin{equation}\label{eq:pointwise_exp_decay}
\Prob \left( \left|h_{\mathrm{full}}^{N,\delta}(x,s;y,t) - h_{\mathrm{half}}^{N,\delta}(x,s;y,t) \right|\geq \epsilon \right) \leq CN^2e^{-c\min\{N^{\delta}, N^{2/9}\}}.
\end{equation}
\end{proposition}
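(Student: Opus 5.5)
Under the coupling of Section~\ref{sec:coupling}, I would decompose both partition functions according to whether the paths stay inside a strip around the line between the endpoints. Write $A=(\overline{x}_N+s_N+\lfloor N^{2/3+\delta}\rfloor, s_N)$ and $B=(\overline{y}_N+t_N+\lfloor N^{2/3+\delta}\rfloor-1,t_N-1)$. Both endpoints lie at distance of order $N^{2/3+\delta}$ from the diagonal $\D$, since $x,y\in[-b,b]$ and $\overline{x}_N,\overline{y}_N=O(N^{2/3})$. Let $S$ be the parallelogram of points within horizontal distance $\tfrac12 N^{2/3+\delta}$ of the segment $AB$; it lies in $\F$ for $N$ large. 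Paths confined to $S$ use only bulk weights, so $\Zin:=Z_S(A;B)$ is common to both models. Write $\Zfull=\Zin+\Zexit_{\mathrm{full}}$ and $\Zhalf=\Zin+\Zexit_{\mathrm{half}}$, where the exit terms sum over paths leaving $S$. Since $\log(\Zin+E)-\log\Zin\le E/\Zin$, it suffices to show that, with probability at least $1-CN^2e^{-c\min\{N^\delta,N^{2/9}\}}$, both $\Zexit_{\mathrm{full}}/\Zin$ and $\Zexit_{\mathrm{half}}/\Zin$ are at most $e^{-N^{1/3}}$, say. Then each $\log Z$ lies within $\epsilon N^{1/3}/(2C)$ of $\log\Zin$, which gives \eqref{eq:pointwise_exp_decay}.

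\textbf{Step 1 (lower bound on $\Zin$).} I would use the full-space constrained free energy estimates of \cite{basu2024temporal}. The free energy of paths confined to a strip of width $N^{2/3+\delta}\gg N^{2/3}$ has lower tail
\[
\Prob\big(\log \Zin\le 4pN(t-s)-sN^{1/3}\big)\le Ce^{-cs^{3/2}}
\]
(or $e^{-cs}$). Taking $s$ a small power of $N$ gives the $N^{2/9}$-type term.

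\textbf{Step 2 (full-space exit term).} The exiting free energy bound of \cite{basu2024temporal}, for paths with transversal fluctuation at least $kN^{2/3}$ with $k=N^{\delta}$, gives $\log\Zexit_{\mathrm{full}}\le 4pN(t-s)-ck^2N^{1/3}$ with probability $1-Ce^{-ck}$. This is much smaller than the bound from Step 1.

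\textbf{Step 3 (half-space exit term, the main obstacle).} Any half-space path leaving $S$ passes through some lattice point $v$ on $\partial S$ at transversal distance about $N^{2/3+\delta}$ from $AB$. Summing over the at most $O(N^2)$ such $v$ (this is the source of the $N^2$ prefactor), I would bound $\Zhalf(A;v)\Zhalf(v;B)\cdot\omega_v^{-1}$. The deterministic input is the shape-function gap. By strict concavity of the log-gamma shape function $f(\text{slope})$, the sum of the leading orders along $A\to v\to B$ falls below $4pN(t-s)$ by at least $cN^{1/3+2\delta}$ (cap at order $N$ if the deviation is macroscopic). It then remains to prove uniform upper-tail bounds for half-space point-to-point free energies with general slope: $\Prob(\log\Zhalf(u;w)\ge \text{shape}+sN^{1/3})\le Ce^{-cs}$, uniformly over slopes in a compact range, and with a coarse linear bound when the slope is degenerate. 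For this I would use the distributional identity of \cite{barraquand2023identity}, which for $\theta\ge0$ relates the half-space partition function (up to a boundary-weight factor) to a full-space log-gamma model with inhomogeneous parameters. That model is stochastically dominated in the upper tail by a homogeneous one, to which the bounds of \cite{barraquand2021fluctuations} apply. The shape computation then confirms that the non-attractive boundary does not increase the leading order. Choosing $s=cN^{2\delta}\wedge N^{1/3}$ gives a failure probability of $e^{-cN^{\delta}}$, and the gap beats Step 1's deficit.

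Combining Steps 1–3 with a union bound completes the proof. I expect Step 3, the uniform half-space upper tail through the identity, to be the hardest part.
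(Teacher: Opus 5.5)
Your Steps 1 and 2 match the paper: a common diagonal-avoiding inner partition function, bounded below by the constrained estimate of \cite{basu2024temporal}, and the full-space exit bound. Step 3, however, has two genuine gaps.

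\textbf{Splitting at $v\in\partial S$.} This split requires upper tails for $\Zhalf(A;v)$ and $\Zhalf(v;B)$ between two \emph{off-diagonal} points. The identity of \cite{barraquand2023identity} says nothing about these: it concerns point-to-line partition functions started at the corner $(1,1)$ of the half-space, i.e.\ on $\D$. The split should be made at the diagonal instead. Half-space paths that leave $S$ without touching $\D$ carry only bulk weights, so their contribution is at most $\Zexit_{\mathrm{full}}$. The diagonal-touching part $\Zb$ satisfies $\Zb\le\sum_i(\omega^{\partial}_{i,i})^{-1}\Zhalf(A;(i,i))\,\Zhalf((i,i);B)$. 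Both factors are now anchored on $\D$ (the first after the reflection $(a,b)\mapsto(-b,-a)$). Each is therefore bounded by a half-space point-to-line partition function, which equals $Z_\theta$ in law. The deterministic gap of order $N^{1/3+2\delta}$ is then the one at the diagonal point $(i,i)$, as in Proposition~\ref{prop:shape_function}.

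\textbf{Domination by a homogeneous model.} The claim that $Z_\theta$ is dominated in the upper tail by a homogeneous full-space model is false for $0\le\theta<\alpha$. Its first column carries $\mathrm{Gamma}^{-1}(\alpha+\theta)$ weights, which stochastically dominate the bulk $\mathrm{Gamma}^{-1}(2\alpha)$ weights. One can only dominate by $Z_0$, and there the column is exactly critical in the diagonal direction, since $\E\log W_{1,j}=-\Psi_0(\alpha)=F(\alpha)$.

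The missing idea is to decompose $Z_0$ by the number $k$ of steps spent in the first column. One then beats the column sum (via Bernstein) with the linear gap $-Nf(T/N)\ge -Nf((T-k)/N)-k\Psi_0(\alpha)+C_0kN^{-1/3+\delta}$. This gap follows from $f'=\Psi_0\circ g^{-1}$ together with $T/N\le 1-\kappa N^{-1/3+\delta}$. When $T-k\le\xi N$, one instead uses the crude thin-rectangle bound of Proposition~\ref{prop:thin-rect} against a gap of order $N$.

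This is why the half-space tail obtainable this way holds only for endpoints at distance $\gtrsim N^{2/3+\delta}$ from the diagonal. It also carries the additive error $CNe^{-c\min\{N^{2\delta},N^{1/3}\}}$, rather than the clean $e^{-cs}$ tail uniform over a compact slope range that you posit.

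A minor point: everything should be centered at $-(\tilde t+\tilde y-\tilde s-\tilde x)\Psi_0(\alpha)$, not $4pN(t-s)$. The discrepancy $p(\overline{y}_N-\overline{x}_N)$ is of order $N^{2/3}$, which exceeds the gap $N^{1/3+2\delta}$ when $\delta<1/6$.
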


Assume Proposition \ref{prop:diff_loggamma}, we can prove Theorem \ref{thm_half_loggamma_to_DL}.
\begin{proof}[Proof of Theorem~\ref{thm_half_loggamma_to_DL}]
Since any compact subsets $D \in \R^4_+$ will be contained in some $Q_b$ for $b$ large enough, it suffices to prove convergence of $h_{\mathrm{half}}^{N,\delta}$ over $Q_b$ for any $b \in \N$. By Proposition~\ref{prop:diff_loggamma},
for all $(x,s;y,t)\in Q_b\cap\R_+^4(N)$,
\[
\Prob\!\left(
\left|h_{\mathrm{full}}^{N,\delta}(x,s;y,t)
-
h_{\mathrm{half}}^{N,\delta}(x,s;y,t)\right|
\ge \epsilon
\right)
\le C N^2e^{-c\min\{N^{\delta}, N^{2/9}\}}.
\]
Since $|Q_b\cap\R_+^4(N)|\le C' N^4$, a union bound gives
\[
\Prob\!\left(
\bigl\|h_{\mathrm{full}}^{N,\delta}-h_{\mathrm{half}}^{N,\delta}\bigr\|_{L^\infty(Q_b)}
\ge \epsilon
\right)
\le C'' N^6 e^{-c\min\{N^{\delta}, N^{2/9}\}}
\;\xrightarrow[N\to\infty]{}\;0.
\]
Since
$h_{\mathrm{full}}^{N,\delta}$ converges to the directed landscape
uniformly over $Q_b$ in distribution, the same holds for
$h_{\mathrm{half}}^{N,\delta}$. This proves
Theorem~\ref{thm_half_loggamma_to_DL}.
\end{proof}

\section{Proof of Proposition~\ref{prop:diff_loggamma}}
In this section we prove Proposition~\ref{prop:diff_loggamma}.
The argument relies on a sequence of probabilistic tail estimates for both the full-space and the half-space log-gamma polymer and a careful analysis of the shape functions, governing the leading-order behavior of the free energy. We will introduce some notations first.

Let $\Psi_0,\Psi_1,\Psi_2$ denote the polygamma functions, defined by
\[
\Psi_0(x) = \frac{d}{dx}\log \Gamma(x), \qquad
\Psi_1(x) = \frac{d}{dx}\Psi_0(x), \qquad
\Psi_2(x) = \frac{d}{dx}\Psi_1(x).
\]
Define the function $g: (0,2\alpha) \rightarrow (0,\infty)$ as
\[
g(x) = \frac{\Psi_1(2\alpha - x)}{\Psi_1(x)}.
\]
It is not difficult to check that $g$ is a smooth, increasing bijection from $(0,2\alpha)$ to $(0, \infty)$. Let $g^{-1}$ denote its inverse function, which is a smooth increasing bijection from $(0, \infty)$ to $(0,2\alpha)$. Now for all $x \in (0, \infty)$, we define the function
\[
f(x) = x\Psi_0(g^{-1}(x)) + \Psi_0(2\alpha - g^{-1}(x)).
\]
We will use the following uniform half-space upper-tail bound, proved in
Section~\ref{sec:proof of prop}.
\begin{proposition}\label{prop:uniform_upper_tail}
    Fix any $\alpha,\kappa>0, \theta \geq 0$. There exists some positive constants $N_0, C, c_1$ such that for all $N \geq N_0$ and $T \geq 1$ satisfying $T/N \in (0, 1-\kappa N^{-1/3+\delta}]$ and all $x \geq 0$, we have
    \begin{equation}
        \Prob\left(\log Z^{\mathrm{half}}(N,T) + Nf\left(T/N\right) \geq xN^{1/3}\right) \leq CNe^{-c_1 \min \{N^{2\delta}, N^{1/3}\}} + CNe^{-c_1x^2}.
    \end{equation}  
\end{proposition}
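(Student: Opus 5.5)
Write $Z^{\mathrm{half}}(N,T)$ for the half-space partition function from the diagonal point $(1,1)$ to $(N,T)$, with $T\le N$. The sign convention in the statement then says that $-f(T/N)N$ is the leading order of $\log Z^{\mathrm{half}}$; that is, $-f(T/N)$ is the full-space log-gamma shape function in direction $T/N$. The plan is to reduce to a full-space bound through the distributional identity of \cite{barraquand2023identity}, and then control the full-space bound by shape-function analysis.

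The identity I will use relates the half-space point-to-point partition function with boundary parameter $\alpha+\theta$ to a full-space log-gamma partition function with inhomogeneous weights. In the non-attractive case $\theta\ge 0$, the boundary parameter enters as a row or column of weights with a shifted parameter (inverse-gamma with parameter $\alpha+\theta$ on the first column, or a similar stationary-type boundary). The steps are as follows.

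First, transfer the event $\{\log Z^{\mathrm{half}}(N,T)+Nf(T/N)\ge xN^{1/3}\}$ to the corresponding event for this inhomogeneous full-space quantity.

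Second, decompose the inhomogeneous full-space partition function according to the exit point $k$ from the boundary row or column. This gives
\[Z \le \sum_{k} \Zb_k \cdot Z^{\mathrm{full}}_{\mathrm{hom}}(k \to (N,T)),\]
where $\Zb_k$ is the product of boundary weights. A union bound over the at most $N$ exit points produces the prefactor $CN$.

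Third, for each $k$, bound $\log \Zb_k$ by a sum of $k$ i.i.d. terms. This sum concentrates around $k\,\E\log\omega^{\partial}$, with exponential tails of order $e^{-c y^2/k}$ at deviation $y$. Bound the homogeneous part by the full-space upper-tail estimate of \cite{barraquand2021fluctuations}, which says that $\log Z^{\mathrm{full}}_{\mathrm{hom}}(m,n)+m f(n/m)$ has upper tail $\le Ce^{-cy^{3/2}}\le Ce^{-c y^2}$ (for $y$ in range) at scale $N^{1/3}$. That estimate is uniform for slopes in a compact subset of $(0,\infty)$. Near-degenerate slopes, meaning very few columns left after the exit, are handled by a crude moment bound, which costs only a term of type $e^{-cN^{1/3}}$.

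Fourth, carry out a deterministic shape-function comparison. I need that for every $k$,
\[
 k\,\E\log\omega^{\partial}_{\mathrm{eff}} - (N-k)f\!\left(\tfrac{T}{N-k}\right)\le -Nf(T/N) - c\,\min\{k^2/N,\;k\}\cdot(\text{gap}),
\]
that is, exiting along the boundary costs a leading-order amount. Here the non-attractive hypothesis enters: $\theta\ge0$ makes the boundary weights no better than what the bulk direction can gain. Concavity of $x\mapsto -f$ (computed via $g$ and the polygamma derivatives) gives the quadratic gap. The restriction $T/N\le 1-\kappa N^{-1/3+\delta}$ keeps the point quantitatively away from the diagonal. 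Near the diagonal, $\theta=0$ is critical and the boundary could otherwise contribute at order $N^{1/3}$. Away from it, the derivative of the shape function differs from the critical value by $\gtrsim N^{-1/3+\delta}$. Combining the gap with the fluctuation bounds, small $k$ gives tails $e^{-cx^2}$, and large $k$ gives the gap term $e^{-c\min\{N^{2\delta},N^{1/3}\}}$; the exponent $N^{2\delta}$ comes from the squared slope separation $(N^{-1/3+\delta})^2$ times $N\cdot N^{-1/3}$ scaling.

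I expect the main obstacle to be the fourth step. This requires the precise form of the identity, including which boundary parameter appears and whether it is $\theta$ or $\alpha-\theta$. It also requires a uniform quantitative lower bound on the shape-function gap, expressed through $g^{-1}$ and $\Psi_1,\Psi_2$, that stays valid as $T/N\to1$ at rate $N^{-1/3+\delta}$. I would first verify the gap by Taylor expanding $f$ around the characteristic slope where $g^{-1}=\alpha$, using $f'(x)=\Psi_0(g^{-1}(x))$, which follows from the envelope identity in the definition of $g$.
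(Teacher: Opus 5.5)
Your outline has the same architecture as the paper's proof: the Barraquand--Wang identity, decomposition by the exit height from the inhomogeneous column, a union bound giving the factor $N$, Bernstein for the boundary sum, Theorem~\ref{thm:full-homogeneous} for the bulk, and a crude moment bound for thin remainders. The gap is in the step you defer as the main obstacle. That step is where the proof actually lives, and what you wrote for it is wrong.

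First, Theorem~\ref{thm:BW} is an identity for the half-space \emph{point-to-line} partition function $Z_2$, not the point-to-point one. The transfer is $Z^{\mathrm{half}}(N,T)\le Z_2\stackrel{d}{=}Z_\theta(N,T)$. Here $Z_\theta$ has bulk weights $\mathrm{Gamma}^{-1}(2\alpha)$, and weights $\mathrm{Gamma}^{-1}(\alpha+\theta)$ on its first \emph{column} $\{(1,j):1\le j\le T\}$, which is the short side.

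Second, exiting at height $k$ therefore leaves $\Zfull(2,k;N,T)$, centred at $-Nf((T-k)/N)$, not $-(N-k)f(T/(N-k))$. The orientation is exactly what produces the right sign. Indeed $\partial_N\bigl(-Nf(T/N)\bigr)=-\Psi_0\bigl(2\alpha-g^{-1}(T/N)\bigr)<-\Psi_0(\alpha)$ for $T<N$. So boundary weights along the long side with $\E\log\omega=-\Psi_0(\alpha)$ (the case $\theta=0$) are strictly favourable, and your displayed inequality fails for every $1\le k\le N-T$.

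Along the column, the per-step bulk value is $-f'(s)=-\Psi_0(g^{-1}(s))$. For $s\le 1-\kappa N^{-1/3+\delta}$, Proposition~\ref{prop:slope} and concavity of $\Psi_0$ give $\Psi_0(\alpha)-\Psi_0(g^{-1}(s))\ge cN^{-1/3+\delta}$. The mean value theorem then gives Lemma~\ref{lem:shape_linear_bound}: a gap $C_0kN^{-1/3+\delta}$ that is \emph{linear} in $k$. Linearity is what makes Bernstein yield $e^{-c\min\{N^{2\delta},N^{1/3+\delta}\}}$ uniformly over $k\ge N^{2/3}$. A gap of order $k^2/N$ (which is what your $\min\{k^2/N,k\}$ reduces to, since $k\le N$) is no larger than the $\sqrt k$ fluctuations at $k\asymp N^{2/3}$; that is critical-slope behaviour. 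The hypothesis $\theta\ge0$ then enters only through $-\Psi_0(\alpha+\theta)\le-\Psi_0(\alpha)$; the paper instead reduces to $\theta=0$ by stochastic domination.

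The thin-remainder case (in the correct geometry, $T-k\le\xi N$) also needs a deterministic margin that you do not supply. Proposition~\ref{prop:thin-rect} only helps if $-Nf(T/N)$ exceeds $-(N+T-k)\Psi_0(2\alpha-\varepsilon)-k\Psi_0(\alpha)$ by at least order $N^{1/3}$; the paper's Lemma~\ref{lem:shape_rect_bound} claims order $N$. For fixed $\varepsilon$ this margin is negative at $k=T$ once $T/N$ is small, since $-Nf(T/N)=-N\Psi_0(2\alpha)+2\sqrt{\Psi_1(2\alpha)NT}+O(T)$. The paper's proof of that lemma uses the lower bound $|F(\alpha)-F(g^{-1}(T/N))|\ge C_2N^{-2/3+2\delta}$ as if it were a lower bound on $F(g^{-1}(T/N))$, i.e.\ in the wrong direction.

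In fact no argument can cover $T/N\to0$. For $T=1$ the only admissible path is horizontal, so $\log Z^{\mathrm{half}}(N,1)$ is a sum of independent terms with variance about $N\Psi_1(2\alpha)$. Also $-Nf(1/N)=-N\Psi_0(2\alpha)+2\sqrt{\Psi_1(2\alpha)N}+O(1)$. By the CLT, the probability at $x=N^{1/6}$ tends to $\Prob\bigl(\mathcal N(0,1)\ge 2+\Psi_1(2\alpha)^{-1/2}\bigr)>0$, while the right-hand side tends to $0$. So the statement needs $T/N$ bounded below, with $\varepsilon$ chosen depending on that bound.

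Finally, $e^{-cy^{3/2}}\le Ce^{-cy^2}$ is false for large $y$. This route, yours and the paper's, therefore gives $CNe^{-cx^{3/2}}$ rather than $CNe^{-cx^2}$. That still suffices for the application, where $x$ is of order $N^{2\delta}$.
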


In order to state and apply two key propositions from~\cite{basu2024temporal},
we introduce the following notations. We denote by
\[
L_a := \{\, a+(j,-j) : j\in\mathbb Z \,\}
\]
the anti-diagonal passing through $a \in \Z^2$.  Its truncated version is
\[
L_a^{k} := \{\, x\in L_a : \|x-a\|_\infty \le k \,\}.
\]
Given $a,b\in\mathbb Z^2$, $k >0$, let $R_{a,b}^{k}$ denote the 
parallelogram with vertices 
\[
a \pm (-k,k), \quad b \pm (-k,k).
\]
Moreover, let $Z^{\mathrm{in},\,h}_{a,b}$ denote the full-space partition function from $a$ to $b$ over paths that remain entirely inside the parallelogram $R^h_{a,b}$. Let $Z^{\mathrm{exit},\,h}_{a,b}$ denote the full-space partition function from $a$ to $b$ over paths that exit the parallelogram $R^h_{a,b}$ through its sides parallel to the diagonal. 

We next state versions of
\cite[Corollary~3.14]{basu2024temporal} and
\cite[Theorem~3.16]{basu2024temporal} adapted to our setting; see
\cite{basu2024temporal} for the full generality.
\begin{proposition}\label{prop:exit}
    There exists constants $C_1, C_2, N_0 > 0$ such that for any $N \geq N_0$, $1 \leq t \leq N^{1/3}$ and $|s| < t/10$, we have
    \begin{equation}
        \Prob \left( \log Z^{\mathrm{exit}, tN^{2/3}}_{(-sN^{2/3}, sN^{2/3}), (N,N)} + 2N\Psi_0(\alpha) \geq -C_1t^2N^{1/3} \right)\leq  e^{-C_2t^3}
    \end{equation}
\end{proposition}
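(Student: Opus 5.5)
The plan is to obtain Proposition~\ref{prop:exit} directly from \cite[Corollary~3.14]{basu2024temporal}, checking that our conventions match theirs. There are three things to check: the weight parameter, the centering, and the geometry of the parallelogram. In \cite{basu2024temporal} the log-gamma weights are inverse-gamma with a single parameter (call it $\rho$). The characteristic direction is the diagonal, and for the point-to-point partition function to $(N,N)$ the law of large numbers centering is $-2N\Psi_0(\rho/2)$.

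First I match the parameters. Taking $\rho=2\alpha$ makes the bulk weights in our coupling exactly theirs. Then I check the centering against the shape function of this paper. Since $g(\alpha)=1$, we have $g^{-1}(1)=\alpha$, and so $f(1)=2\Psi_0(\alpha)$. Hence $\log Z^{\mathrm{full}}(0,0;N,N)\approx -Nf(1)=-2N\Psi_0(\alpha)$, which is consistent with the term $+2N\Psi_0(\alpha)$ in the statement. Two convention differences remain. One is whether the endpoint weights are included in a path's weight. The other is whether $N$ counts steps or diagonal lattice distance, which shifts the centering by $O(1)$. Both can be absorbed into $C_1$, because $t\ge 1$ gives $t^2N^{1/3}\gg O(1)$.

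Second I reduce to their geometry. The starting point $(-sN^{2/3},sN^{2/3})$ lies on the anti-diagonal $L_{(0,0)}$, and the endpoint $(N,N)$ lies on $L_{(N,N)}$. By translation invariance of the i.i.d.\ bulk field, I shift so that the start is at the origin. The endpoint then becomes $(N+sN^{2/3},N-sN^{2/3})$, which is off-diagonal by $sN^{2/3}$, and $R^{tN^{2/3}}$ becomes a parallelogram between these two points. The corollary in \cite{basu2024temporal} is stated for endpoints whose transversal offset is a small multiple of the width, so the hypothesis $|s|<t/10$ is exactly what is needed. Under this hypothesis the straight line joining the endpoints stays at distance at least $\tfrac{9}{10}tN^{2/3}$ from the long sides. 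The centering for this slightly off-diagonal endpoint differs from $-2N\Psi_0(\alpha)$ by roughly $-c s^2N^{1/3}$, by a Taylor expansion of the shape function around slope $1$. This correction is nonpositive, since the diagonal maximizes the shape function, so it only helps the upper bound. It is also at most $t^2N^{1/3}/100$ in size, so it is absorbed after shrinking $C_1$.

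Third, the range $1\le t\le N^{1/3}$ has to match their range: their estimate holds for $t$ up to a small constant times $N^{1/3}$. If the top of our range exceeds theirs, I would handle it by monotonicity. Enlarging the parallelogram only shrinks the set of exiting paths, so $Z^{\mathrm{exit},tN^{2/3}}$ is monotonically decreasing in $t$. For very large $t$ I would also use a crude bound: every exiting path must have transversal fluctuation of order $N$. Its weight can then be controlled by the shape-function gap $f(\text{slope})-f(1)\gtrsim t^2N^{-2/3}$ together with a union bound and a moderate-deviation upper-tail estimate. This gives probability at most $e^{-cN}\le e^{-C_2t^3}$.

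The main obstacle I expect is bookkeeping rather than new ideas. It consists of confirming that the event ``exits through the sides parallel to the diagonal'' and the definition of $R^h_{a,b}$ agree with the corresponding objects in \cite{basu2024temporal}, and that the off-diagonal shift of the endpoint is covered by their uniformity.
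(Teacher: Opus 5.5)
Your proposal is correct and takes the same route as the paper. The paper gives no independent argument: it records \cite[Corollary~3.14]{basu2024temporal} with the inverse-gamma parameter there set to $2\alpha$, so that the diagonal centering becomes $-2N\Psi_0(\alpha)$, which is exactly your identification via $g^{-1}(1)=\alpha$ and $f(1)=2\Psi_0(\alpha)$.

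Your contingency steps (recentering to an off-diagonal endpoint, extending the range of $t$) are unnecessary, since the statement is transcribed in the same range as the source. Also, an endpoint-weight convention mismatch would produce a random shift $\log\omega$ with exponential tails rather than a deterministic $O(1)$ shift; this is still harmless because $t^2N^{1/3}\ge t^3$.
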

\begin{proposition}\label{prop:in}
    For any $a_0 > 0$, there exists constant $c, t_0, N_0 > 0$ such that for any $N \geq N_0, t \geq t_0$ and $p \in {L}_{N}^{a_0N^{2/3}}$, we have
    \begin{equation}
        \Prob \left(\log Z^{\mathrm{in}, N^{2/3}}_{(0,0), {p}}+ 2N\Psi_0(\alpha) \leq -tN^{1/3} \right) \leq \sqrt{t}e^{-ct}
    \end{equation}
\end{proposition}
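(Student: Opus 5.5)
We would prove Proposition~\ref{prop:in} by chopping the polymer into many short pieces. Supermultiplicativity then reduces the lower tail of the constrained free energy to lower tails of $m$ independent, essentially unconstrained free energies. We set $m=\lfloor \varepsilon t^{3/2}\rfloor$, with $\varepsilon>0$ small depending on $a_0$, and assume $t\ge t_0$ so that $m\ge 2$. We can also assume $t\le N^{1/3}$, say, since for larger $t$ the event is controlled by a crude bound. Write $p=(N,N)+(j,-j)$ with $|j|\le a_0N^{2/3}$. Let $v_0=(0,0),v_1,\dots,v_m=p$ be lattice points within $O(1)$ of the straight segment from $(0,0)$ to $p$, spaced so that $v_{k}-v_{k-1}$ has $\ell^1$-length about $2N/m$. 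Restricting the sum defining $Z^{\mathrm{in},N^{2/3}}_{(0,0),p}$ to paths through $v_1,\dots,v_{m-1}$, each in the small parallelogram $R^{N^{2/3}/2}_{v_{k-1},v_k}\subset R^{N^{2/3}}_{(0,0),p}$, gives
\[
\log Z^{\mathrm{in},N^{2/3}}_{(0,0),p}\;\ge\;\sum_{k=1}^m \log Z^{\mathrm{in},N^{2/3}/2}_{v_{k-1},v_k}-\sum_{k=1}^{m-1}\log\omega_{v_k}.
\]
The correction $\log\omega_{v_k}$ removes the double-counted junction weights. These are independent log inverse-gamma variables with exponential upper tails, so their sum exceeds $C m$ with probability at most $e^{-cm}$. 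This loss is negligible against $tN^{1/3}$ once $t\ge t_0$.

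Next we control the deterministic centering. The slope of each $v_k-v_{k-1}$ deviates from the diagonal by $O(a_0N^{-1/3})$. A second-order Taylor expansion of the point-to-point shape function around the diagonal therefore shows that $\sum_k \E$-centerings equal $-2N\Psi_0(\alpha)-O(a_0^2N^{1/3})$. The $O(a_0^2 N^{1/3})$ term is at most $\tfrac t4N^{1/3}$ for $t\ge t_0(a_0)$. It then suffices to show that each segment satisfies a lower tail bound at its natural scale $n:=N/m$:
\[
\Prob\!\left(\log Z^{\mathrm{in},N^{2/3}/2}_{v_{k-1},v_k}-\text{(shape)}\le -y\,n^{1/3}\right)\le Ce^{-cy}
\]
for all $y\ge 0$.

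Relative to length $n$, the window $N^{2/3}/2=\tfrac12 m^{2/3}n^{2/3}$ is huge. To prove the per-segment bound, we write $Z^{\mathrm{in}}=Z^{\mathrm{full}}-Z^{\mathrm{exit}}$. The full-space point-to-point lower tail at general slope near $1$, from \cite{barraquand2021fluctuations}, gives $Ce^{-cy}$ (indeed $e^{-cy^{3/2}}$). For the exit term we apply Proposition~\ref{prop:exit} at scale $n$, with parameter $\tfrac12 m^{2/3}$ and the small offset absorbed into $s$. It gives $\log Z^{\mathrm{exit}}\le \text{(shape)}-C_1m^{4/3}n^{1/3}/4$ off an event of probability $e^{-cm^2}$. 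On the complement of that event and of the full-space lower-tail event, $Z^{\mathrm{exit}}\le\tfrac12 Z^{\mathrm{full}}$ whenever $y\le c m^{4/3}$. Hence $\log Z^{\mathrm{in}}\ge\log Z^{\mathrm{full}}-\log 2$, and the per-segment bound holds up to an additive error $e^{-cm^2}$. For $y> cm^{4/3}$ we compare with the events already excluded.

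We then sum $m$ independent variables $X_k\ge 0$, defined as the negative parts of the normalized segment deficits. Each has exponential tails at scale $n^{1/3}$. We need $\sum_k X_k\ge \tfrac t2 N^{1/3}/n^{1/3}=\tfrac t2 m^{1/3}$. Since $t m^{1/3}\asymp \varepsilon^{-2/3}m$, this is a large multiple of $m$ once $\varepsilon$ is small. A Chernoff bound then gives probability at most $e^{-c' t m^{1/3}}\le e^{-ct}$. Adding the junction, exit, and shape contributions yields a bound of the form $C e^{-ct}$. The $\sqrt t$ prefactor in the statement is harmless slack; it can absorb the polynomially many union bounds over the $m$ segments.

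We expect the main obstacle to be the uniformity of the per-segment lower tail across slopes and across the lattice rounding of the $v_k$. The input bounds must hold uniformly for slopes within $O(a_0N^{-1/3})$ of the diagonal and for lengths $n=N/m$ down to $N/t^{3/2}\ge N^{1/2}$, which is large but not comparable to $N$. Our first attempt would be to use the general-slope moderate-deviation bounds of \cite{barraquand2021fluctuations}, which are uniform on compact slope sets. We would check that the second-order shape expansion error is $O(a_0^2N^{1/3})$ uniformly in $k$, rather than $m$ times a per-segment error that could accumulate.
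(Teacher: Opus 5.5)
The paper does not prove this statement. It imports it from \cite[Theorem~3.16]{basu2024temporal}, so your argument is a self-contained reconstruction. Its strategy is sound: supermultiplicativity over $m\asymp\varepsilon t^{3/2}$ pieces, a per-piece left tail from $Z^{\mathrm{in}}=Z^{\mathrm{full}}-Z^{\mathrm{exit}}$ and Proposition~\ref{prop:exit}, then a Chernoff bound.

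It helps to compare it with the cruder bookkeeping that the stated form $\sqrt t\,e^{-ct}$ points to. Cut into order $\sqrt t$ pieces and union-bound. Some piece must then fall about $t^{2/3}$ below its shape in its own units. That costs $e^{-ct}$ from an $e^{-cy^{3/2}}$ left tail, and Proposition~\ref{prop:exit} with parameter of order $t^{1/3}$ also costs $e^{-ct}$; the union over pieces gives the $\sqrt t$ prefactor. That route needs the sharp $3/2$-power left tail, but no independence and no mean control, since the accumulated Tracy--Widom shift is only $O(t^{1/3}N^{1/3})$. Your route needs only exponential per-piece tails. It pays with independence and the small-$\varepsilon$ control of the $O(m)$ accumulated mean shift, and in return gives the stronger bound $e^{-ct^{3/2}}$ wherever $m$ need not be capped.

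Several steps need repair before this is a proof.

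\textbf{(i) Large $t$.} The reduction to $t\le N^{1/3}$ ``by a crude bound'' fails. A single path $\pi_0$ gives $\log Z^{\mathrm{in}}\ge\sum_{v\in\pi_0}\log\omega_v$, whose mean $-(2N+1)\Psi_0(2\alpha)$ already lies about $2N(\Psi_0(2\alpha)-\Psi_0(\alpha))$ below the centering. So it says nothing unless $tN^{1/3}\gtrsim N$. Also, $m=\varepsilon t^{3/2}$ is meaningless once $N/m<N_0$. Take instead $m=\min\{\lfloor\varepsilon t^{3/2}\rfloor,\lfloor 4N^{1/3}\rfloor\}$. When $N/m\le N^{2/3}/4$, no path of a piece can leave its parallelogram. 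Then Proposition~\ref{prop:exit} is not needed; it is also not applicable, since its parameter $m^{2/3}/2$ would exceed $(N/m)^{1/3}$. The Chernoff condition $tm^{1/3}\gg m$ still holds, because $t\ge\varepsilon^{-2/3}m^{2/3}$.

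\textbf{(ii) Independence.} Your $X_k$ are not independent: consecutive pieces share $\omega_{v_k}$. Make the pieces vertex-disjoint, ending the $k$-th piece at $v_k-(1,0)$ and starting the next at $v_k$. Then $\log Z^{\mathrm{in}}\ge\sum_k\log Z_k$ holds exactly, with independent summands and no junction term. If you keep the junctions, note that with capped $m$ the junction sum must be bounded by $\tfrac t4N^{1/3}$ via Bernstein, since $e^{-cm}$ is then no longer at most $e^{-ct}$.

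\textbf{(iii) The additive error.} The per-piece bound from $Z^{\mathrm{in}}=Z^{\mathrm{full}}-Z^{\mathrm{exit}}$ holds only for $y\lesssim m^{4/3}$. It also carries an additive $e^{-cm^2}$ that does not decay in $y$, so it cannot be fed into a Chernoff bound as it stands. Work instead on the global event where every exit partition function is at least $C_1m^{4/3}n^{1/3}/4$ below its shape and every full-space deficit is at most $cm^{4/3}$ in units of $n^{1/3}$. Its complement has probability at most $m e^{-cm^{4/3}}$. On this event each constrained deficit is at most the full-space deficit plus $\log 2$, so you apply Chernoff to the independent full-space deficits.

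\textbf{(iv) The left-tail input.} In this paper \cite{barraquand2021fluctuations} is used only for a right-tail bound (Theorem~\ref{thm:full-homogeneous}). The left-tail bound you need must hold uniformly over endpoints in $L_n^{a_0n^{2/3}}$ and lengths $n\ge N^{2/3}/4$. It has to come from a source that actually proves it. For $y\gtrsim n^{2/3}$ you can fall back on a single-path bound, which has sub-Gaussian lower deviations.
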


Lastly, we need the following inequality for the shape function.
\begin{proposition}\label{prop:shape_function}
Fix $\theta \geq 0$. There exist constants $D>0$ and $N_0\ge1$ such that for all $N\ge N_0$
and all $(x,s;y,t)\in Q_b \cap R_+^4(N)$,
\begin{equation}\label{eq:key_bound}
-(\tilde t + \tilde{y}-\tilde s - \tilde{x})\Psi_0(\alpha)
\;\ge\;
4D N^{1/3+2\delta}
+\max_{\max\{\tilde{x}, \tilde{s}\}\leq i \leq \min\{\tilde{y}, \tilde{t}\}}
\Bigl\{
f(i-\tilde x,\, i-\tilde s)
+
f(\tilde t-i,\, \tilde y-i)
\Bigr\},
\end{equation}
where
\[
\tilde x=\overline{x}_N+s_N+\lfloor N^{2/3+\delta}\rfloor,\qquad
\tilde y=\overline{y}_N+t_N+\lfloor N^{2/3+\delta}\rfloor-1,
\]
\[
\tilde s=s_N,\qquad
\tilde t=t_N,
\]
and the function $f:\mathbb R_{\geq 0} \times \R_{>0}\to\mathbb R$ is defined by $f(x,y)= -\,y\,f(x/y)$.
\end{proposition}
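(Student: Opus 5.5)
The plan is to reduce \eqref{eq:key_bound} to a quantitative strict-concavity statement for the full-space shape function
\[
F(a,b) := f(a,b) = -b\,f(a/b), \qquad a\ge 0,\ b>0,
\]
which is $1$-homogeneous. I would work with the one-variable function $h(r) := f(r) - (1+r)\Psi_0(\alpha)$ on $r \ge 0$.

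\textbf{Step 1 (calculus of $f$).} Write $z=g^{-1}(x)$. The defining relation $g(z)=x$ is exactly $x\Psi_1(z)-\Psi_1(2\alpha-z)=0$, which is the stationarity condition in $z$ of $x\Psi_0(z)+\Psi_0(2\alpha-z)$. By the envelope argument this gives
\[
f'(x)=\Psi_0(g^{-1}(x)), \qquad f''(x)=\Psi_1(g^{-1}(x))\,(g^{-1})'(x)>0 .
\]
So $f$ is strictly convex, hence $h$ is convex. Since $g(\alpha)=1$, we have $f(1)=2\Psi_0(\alpha)$ and $f'(1)=\Psi_0(\alpha)$, so $h(1)=h'(1)=0$ and $h''(1)>0$.

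I would then show the global lower bound
\[
h(r)\ge c\min\{(r-1)^2,\ |r-1|\}\quad\text{for all } r\ge 0 .
\]
Near $r=1$ this follows from $h''>0$ on a compact neighbourhood. Away from $1$ it follows from convexity: the slope of $h$ is bounded away from zero once $|r-1|\ge \eta$. At $r=0$ one uses the limit $g^{-1}(x)\to 0$, so $f$ extends continuously.

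Translated back to $F$ via homogeneity, this reads, for all $a\ge0$, $b>0$,
\[
F(a,b)\le -(a+b)\Psi_0(\alpha) - c\min\Bigl\{\tfrac{(a-b)^2}{a+b},\ |a-b|\Bigr\}. \tag{$*$}
\]
The factor $b$ versus $a+b$ only changes constants, since for $r\ge0$ we have $(r-1)^2 b\ge (a-b)^2/(2(a+b))$ up to constants. Including the case $b$ small, uniformly, is the point where care is needed.

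\textbf{Step 2 (apply to the two legs).} For $i$ in the allowed range, set $u=(i-\tilde x,\ i-\tilde s)$ and $v=(\tilde t-i,\ \tilde y-i)$. Both have nonnegative coordinates, and the boundary cases $u_2=0$ or $v_1=0$ are handled by the convention $f(0,\cdot)$ and the continuous extension. Their $\ell^1$ norms add:
\[
|u|_1+|v|_1=\tilde t+\tilde y-\tilde s-\tilde x .
\]
The transverse deviations satisfy
\[
|u_1-u_2|=\tilde x-\tilde s, \qquad |v_1-v_2|=\tilde y-\tilde t .
\]
Both equal $N^{2/3+\delta}+O(bN^{2/3})\ge \tfrac12 N^{2/3+\delta}$ for $N\ge N_0(b)$. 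Moreover $|u|_1,|v|_1\le C_b N$, since $t-s\le 2b$ and $|x|,|y|\le b$.

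Summing $(*)$ over $u$ and $v$, the linear terms reproduce exactly the left side of \eqref{eq:key_bound}. What remains is
\[
c\min\Bigl\{\frac{N^{4/3+2\delta}}{4C_bN},\ \tfrac12N^{2/3+\delta}\Bigr\}\ \ge\ 4DN^{1/3+2\delta},
\]
using $N^{2/3+\delta}\ge N^{1/3+2\delta}$ for $\delta\le 1/3$. For larger $\delta$ the index range is empty or the bound must be read with $D$ adjusted; I would simply restrict to small $\delta$, as the main theorem allows. The bound is uniform in $i$, so it passes to the maximum.

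\textbf{Main obstacle.} The main difficulty is making $(*)$ uniform over all directions, including nearly axial ones where $u$ or $v$ is almost vertical or horizontal. Here $f$ and $g^{-1}$ degenerate ($g^{-1}(x)\to0$ or $2\alpha$), so Taylor expansion alone fails. I would first try to handle it purely by convexity of $h$ plus continuity of $f$ at $0$, noting that $h(r)\to\infty$ linearly as $r\to\infty$ since $f'(r)\to\Psi_0(2\alpha)>\Psi_0(\alpha)$. Only the linear branch of the minimum is needed in that regime.
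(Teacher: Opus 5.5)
Your proof is correct, but it takes a different route from the paper's. The paper proves no direction-uniform inequality like your $(*)$. Instead it observes that one of the two legs, say $(\tilde x,\tilde s)\to(i,i)$, has $\ell^1$-length at least half of $\tilde t+\tilde y-\tilde s-\tilde x\gtrsim N/b$. The slope of that leg therefore differs from $1$ by order $N^{-1/3+\delta}$ while still lying in a small Taylor window. The paper then invokes the cited local expansions of $g^{-1}$ near $1$ and of the per-unit-length shape function $F(\zeta)$ near $\zeta=\alpha$ (Propositions \ref{prop:slope} and \ref{prop:shape}, from \cite{basu2024temporal}). This gives a deficit of order $N^{-2/3+2\delta}$ per unit length on the long leg. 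The other leg, which may point in any direction, is handled only through the global maximality $F(\zeta)\le F(\alpha)$ (Proposition \ref{prop:peak}).

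You instead obtain a quadratic/linear deficit valid in every direction, directly from strict convexity of the one-variable $f$. That convexity comes from the envelope identity $f'=\Psi_0\circ g^{-1}$, which the paper itself derives later in Lemma \ref{lem:shape_linear_bound}. You then apply the bound symmetrically to both legs, with no case split.

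Each approach has its advantages:
\begin{itemize}
\item Yours is self-contained and does not need the three cited propositions.
\item Yours treats the degenerate endpoint $i=\tilde t$ explicitly. There the second leg is axial ($v_1=0$), and you use the continuous extension $f(0)=\Psi_0(2\alpha)$; the paper's parametrization via $g^{-1}$ on $(0,\infty)$ leaves this case implicit.
\item Yours shows exactly where $\delta\le 1/3$ is used, namely in the linear branch of the minimum. The paper uses $\delta<1/3$ only implicitly, to stay inside the Taylor window.
\item The paper's argument is shorter given the citations, and needs only local information near the diagonal plus the location of the maximum.
\end{itemize}

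The ``main obstacle'' you flag is already resolved by your Step 1. Directions are parametrized by $r=a/b\in[0,\infty)$, and your bound on $h$ holds on all of $[0,\infty)$. Homogeneity then gives exactly $b\,h(a/b)\ge c\min\{(a-b)^2/b,\ |a-b|\}$, and $(a-b)^2/b\ge (a-b)^2/(a+b)$. So no extra care is needed for nearly axial directions or small $b$.
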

Now we are ready to prove Proposition \ref{prop:diff_loggamma}.
\begin{proof}[Proof of Proposition~\ref{prop:diff_loggamma}]
We begin by further decompose $\Zfull$ and $\Zhalf$ as the following:
\begin{equation}
    \begin{aligned}
        \Zfull(\tilde{x}, \tilde{s};\tilde{y}, \tilde{t}) &= \Zin(\tilde{x}, \tilde{s};\tilde{y}, \tilde{t}) + \Zexit(\tilde{x}, \tilde{s};\tilde{y}, \tilde{t})\\
        \Zhalf(\tilde{x}, \tilde{s};\tilde{y}, \tilde{t}) &= \Zin(\tilde{x}, \tilde{s};\tilde{y}, \tilde{t}) + \Zb(\tilde{x}, \tilde{s};\tilde{y}, \tilde{t})      
    \end{aligned}
\end{equation}
where 
\begin{equation}
        \begin{aligned}
        &\Zin(\tilde{x}, \tilde{s};\tilde{y}, \tilde{t}) = \sum_{\pi \in \Pi_{\F}[(\tilde{x},\tilde{s})\to(\tilde{y},\tilde{t})]} 
\prod_{(i,j)\in\pi} \omega_{i,j}\\
&\Zb(\tilde{x}, \tilde{s};\tilde{y}, \tilde{t}) =\sum_{\pi \cap \D \neq \emptyset; \pi \in \Pi_{\HH}[(\tilde{x},\tilde{s})\to(\tilde{y},\tilde{t})]}  
\prod_{(i,j)\in\pi} \omega_{i,j}\\
&\Zexit(\tilde{x}, \tilde{s};\tilde{y}, \tilde{t}) =\sum_{\pi \cap \D \neq \emptyset; \pi \in \Pi[(\tilde{x},\tilde{s})\to(\tilde{y},\tilde{t})]}  
\prod_{(i,j)\in\pi} \omega_{i,j}.
    \end{aligned}
\end{equation}

Thus, 
\begin{equation}
    \begin{aligned}
    &\Prob\left( \left|h_{\mathrm{full}}^{N,\delta}(x,s;y,t) - h_{\mathrm{half}}^{N,\delta}(x,s;y,t)  \right| \geq \varepsilon \right)\\
        &=\Prob\left( \left|\log \Zfull(\tilde{x}, \tilde{s};\tilde{y}, \tilde{t}) -  \log \Zhalf(\tilde{x}, \tilde{s};\tilde{y}, \tilde{t}) \right| \geq \frac{\epsilon \sqrt{2}N^{1/3}}{q\sigma_p} \right)\\
        &\leq  \Prob\left( \left|\log \Zfull(\tilde{x}, \tilde{s};\tilde{y}, \tilde{t}) - \log \Zin(\tilde{x}, \tilde{s};\tilde{y}, \tilde{t})\right| + \left|  \log \Zhalf(\tilde{x}, \tilde{s};\tilde{y}, \tilde{t})- \log \Zin(\tilde{x}, \tilde{s};\tilde{y}, \tilde{t}) \right| \geq \frac{\epsilon \sqrt{2}N^{1/3}}{q\sigma_p} \right)\\
&\leq \Prob \left( \log\left(1+ \frac{\Zexit(\tilde{x}, \tilde{s};\tilde{y}, \tilde{t})}{\Zin(\tilde{x}, \tilde{s};\tilde{y}, \tilde{t})}\right) \geq \frac{\epsilon N^{1/3}}{\sqrt{2}q\sigma_p}\right) + \Prob \left( \log  \left(1+ \frac{\Zb(\tilde{x}, \tilde{s};\tilde{y}, \tilde{t})}{\Zin(\tilde{x}, \tilde{s};\tilde{y}, \tilde{t})}\right) \geq \frac{\epsilon N^{1/3}}{\sqrt{2}q\sigma_p} \right)
\end{aligned}
\end{equation}
Choose $\beta > 0$ such that $N^{2/3+\delta} - bN^{2/3}q^{-2} > \beta N^{2/3+\delta}$. By Proposition \ref{prop:exit}, we know that 
\[
\begin{aligned}
&\Prob(\log \Zexit(\tilde{x}, \tilde{s};\tilde{y}, \tilde{t}) + (\tilde{t}+\tilde{y}-\tilde{s}-\tilde{x})\Psi_0(\alpha) \geq -C_1\beta^2N^{2\delta + 1/3})\\
&\leq \Prob(\log Z^{\mathrm{exit},\beta N^{2/3+\delta}}_{(\tilde{x}, \tilde{s}),
(\tilde{y}, \tilde{t})} + (\tilde{t}+\tilde{y}-\tilde{s}-\tilde{x})\Psi_0(\alpha) \geq -C_1\beta^2N^{2\delta + 1/3})\\
&\leq e^{-C_2\beta^3N^{3\delta}}.
\end{aligned}
\]
Similarly, by Proposition \ref{prop:in}, we know that 
\[
\begin{aligned}
&\Prob(\log \Zin(\tilde{x}, \tilde{s};\tilde{y}, \tilde{t}) + (\tilde{t}+\tilde{y}-\tilde{s}-\tilde{x})\Psi_0(\alpha) \leq -N^{\delta + 1/3})\\
&\leq \Prob(\log Z^{\mathrm{in}, N^{2/3}}_{(\tilde{x}, \tilde{s}),
(\tilde{y}, \tilde{t})} + (\tilde{t}+\tilde{y}-\tilde{s}-\tilde{x})\Psi_0(\alpha) \leq -N^{\delta + 1/3})\\
&\leq N^{\delta/2}e^{-cN^{\delta}}.
\end{aligned}
\]
Thus, with probability less than $N^{\delta/2}e^{-cN^{\delta}}$ + $e^{-C_2\beta^3N^{3\delta}}$,
\begin{equation}
    \frac{\Zexit(\tilde{x}, \tilde{s};\tilde{y}, \tilde{t})}{\Zin(\tilde{x}, \tilde{s};\tilde{y}, \tilde{t})} \geq e^{-C_1 \beta^2 N^{2\delta + 1/3} + N^{1/3 + \delta}}.
\end{equation}
For $N$ large enough such that $\frac{\epsilon N^{1/3}}{\sqrt{2}q\sigma_p} > \log 2$ and ${-C_1 \beta^2 N^{2\delta + 1/3} + N^{1/3 + \delta}} \leq 0$, we know
\[
\begin{aligned}
    \Prob \left( \log\left(1+ \frac{\Zexit(\tilde{x}, \tilde{s};\tilde{y}, \tilde{t})}{\Zin(\tilde{x}, \tilde{s};\tilde{y}, \tilde{t})}\right) \geq \frac{\epsilon N^{1/3}}{\sqrt{2}q\sigma_p}\right) &
    \leq \Prob \left(  \frac{\Zexit(\tilde{x}, \tilde{s};\tilde{y}, \tilde{t})}{\Zin(\tilde{x}, \tilde{s};\tilde{y}, \tilde{t})} \geq e^{-C_1 \beta^2 N^{2\delta + 1/3} + N^{1/3 + \delta}} \right)\\
    &\leq N^{\delta/2}e^{-cN^{\delta}} + e^{-C_2\beta^3N^{3\delta}}
\end{aligned}
\]

For the upper tail of $\Zb$, we also start by observing:
\[
\Zb(\tilde{x}, \tilde{s};\tilde{y}, \tilde{t}) \leq  \sum_{i = \max\{\tilde{x}, \tilde{s}\}}^{ \min\{\tilde{y}, \tilde{t}\}} (w^{\partial}_{i,i})^{-1} \Zhalf(\tilde{x}, \tilde{s};i,i)\Zhalf(i,i;\tilde{y},\tilde{t}).
\]
Recall the definition of $f:\mathbb R_{\geq 0} \times \R_{>0}\to\mathbb R$ from Proposition \ref{prop:shape_function}. By Proposition \ref{prop:uniform_upper_tail} where we take $\kappa = \frac{1}{4b}$, there exists some $C,c_1$ such that for all $N$ large enough, $\max\{\tilde{x}, \tilde{s}\}\leq i \leq \min\{\tilde{y}, \tilde{t}\}$, and $x > 0$, we have
\[
\Prob\left(\log Z^{\mathrm{half}}(\tilde{x}, \tilde{s};i,i) - f(i-\tilde{x},i - \tilde{s}) \geq xN^{1/3}\right) \leq CNe^{-c_1 \min \{N^{4\delta/3}, N^{2/9}\}} + CNe^{-c_1x^2}.
\]
\[
\Prob\left(\log Z^{\mathrm{half}}(i,i;\tilde{y}, \tilde{t}) - f(\tilde{t}-i, \tilde{y}-i) \geq xN^{1/3}\right) \leq CNe^{-c_1 \min \{N^{4\delta/3}, N^{2/9}\}} + CNe^{-c_1x^2}.
\]
Moreover, since $-\log w_{i,i}^{\partial}$ is a subexponential random variable, for $x > 0$, we have
\[
\Prob \left(-\log w_{i,i}^{\partial} -\Psi_0(\alpha + \theta) \geq x \right) \leq C'e^{-c'x}.
\]
Let $D > 0$ be the constant from Proposition \ref{prop:shape_function} such that 
\begin{equation}
-(\tilde t + \tilde{y}-\tilde s - \tilde{x})\Psi_0(\alpha)
\;\ge\;
4D N^{1/3+2\delta}
+\max_{\max\{\tilde{x}, \tilde{s}\}\leq i \leq \min\{\tilde{y}, \tilde{t}\}}
\Bigl\{
f(i-\tilde x,\, i-\tilde s)
+
f(\tilde t-i,\, \tilde y-i)
\Bigr\}.
\end{equation}
Thus,
\[
\begin{aligned}
&\Prob \left(({w_{i,i}^\partial})^{-1}\Zhalf(\tilde{x},\tilde{s};i,i)\Zhalf(i,i;\tilde{y},\tilde{t}) \geq \exp\left( f(i-\tilde{x}, i-\tilde{s})+ f(\tilde{y}- i, \tilde{t}-i) + 3DN^{1/3+2\delta}\right) \right)\\
&\leq \Prob \left(\log Z^{\mathrm{half}}(\tilde{x}, \tilde{s};i,i) - f(i-\tilde{x},i - \tilde{s}) \geq DN^{1/3 + 2\delta} \right)\\
& \quad + \Prob \left(\log Z^{\mathrm{half}}(i,i;\tilde{y}, \tilde{t}) - f(\tilde{t}-i, \tilde{y}-i) \geq DN^{1/3+2\delta}\right) + \Prob\left(-\log w_{i,i}^\partial \geq DN^{1/3+2\delta}\right)\\
&\leq 2CNe^{-c_1 \min \{N^{4\delta/3}, N^{2/9}\}} + 2CNe^{-dN^{4\delta}} + C'e^{-d'N^{1/3}}.
\end{aligned}
\]
where $d = c_1D^2$ and $d' = c'D$. Since
\[
\log \Zb(\tilde{x}, \tilde{s};\tilde{y}, \tilde{t}) \leq \log N + \max_{\max\{\tilde{x}, \tilde{s}\}\leq i \leq \min\{\tilde{y}, \tilde{t}\}} \left\{ \log \Zhalf(\tilde{x},\tilde{s};i,i) + \log \Zhalf(i, i;\tilde{y}, \tilde{t}) -\log w_{i,i}^\partial \right\},
\]
we have
\[
\begin{aligned}
&\Prob\left(\log \Zb(\tilde{x}, \tilde{s};\tilde{y}, \tilde{t}) \geq  \max_{\max\{\tilde{x}, \tilde{s}\}\leq i \leq \min\{\tilde{y}, \tilde{t}\}}
\Bigl\{
f(i-\tilde x,\, i-\tilde s)
+
f(\tilde t-i,\, \tilde y-i)
\Bigr\}+ 3DN^{1/3+2\delta}\right) \\
&\leq \sum_{i = \max\{\tilde{x}, \tilde{s}\}}^{ \min\{\tilde{y}, \tilde{t}\}}  \Prob \left(({w_{i,i}^\partial})^{-1}\Zhalf(\tilde{x},\tilde{s};i,i)\Zhalf(i,i;\tilde{y},\tilde{t}) \geq \exp\left( f(i-\tilde{x}, i-\tilde{s})+ f(\tilde{y}- i, \tilde{t}-i) + 3DN^{1/3+2\delta}\right) \right)\\
&\leq 10bN(2CNe^{-c_1 \min \{N^{4\delta/3}, N^{2/9}\}} + 2CNe^{-dN^{4\delta}} + C'e^{-d'N^{1/3}})
\end{aligned}
\]
Lastly, recall 
\[
 \Prob(\log \Zin(\tilde{x}, \tilde{s};\tilde{y}, \tilde{t}) + (\tilde{t}+\tilde{y}-\tilde{s}-\tilde{x})\Psi_0(\alpha) \leq -N^{\delta + 1/3}) \leq N^{\delta/2}e^{-cN^{\delta}}.
\]
Thus, with probability less than $N^{\delta/2}e^{-cN^{\delta}} + C'N^2e^{-c_1 \min \{N^{4\delta/3}, N^{2/9}\}}$,
\begin{equation}
    \frac{\Zb(\tilde{x}, \tilde{s};\tilde{y}, \tilde{t})}{\Zin(\tilde{x}, \tilde{s};\tilde{y}, \tilde{t})} \geq e^{-DN^{1/3+2\delta} + N^{1/3 + \delta}}.
\end{equation}
For $N$ large enough such that $\frac{\epsilon N^{1/3}}{\sqrt{2}q\sigma_p} > \log 2$ and ${-D N^{1/3 + 2\delta} + N^{1/3 + \delta}} \leq 0$, we know
\[
\begin{aligned}
    \Prob \left( \log\left(1+ \frac{\Zb(\tilde{x}, \tilde{s};\tilde{y}, \tilde{t})}{\Zin(\tilde{x}, \tilde{s};\tilde{y}, \tilde{t})}\right) \geq \frac{\epsilon N^{1/3}}{\sqrt{2}q\sigma_p}\right) &
    \leq \Prob \left(  \frac{\Zb(\tilde{x}, \tilde{s};\tilde{y}, \tilde{t})}{\Zin(\tilde{x}, \tilde{s};\tilde{y}, \tilde{t})} \geq e^{-DN^{1/3+2\delta} + N^{1/3 + \delta}} \right)\\
    &\leq N^{\delta/2}e^{-cN^{\delta}} + C'N^2e^{-c_1 \min \{N^{4\delta/3}, N^{2/9}\}}.
\end{aligned}
\]

\end{proof}

\section{Analysis of the shape function}
We will devote this section to prove Proposition \ref{prop:shape_function}. Recall that 
\[
g(x) = \frac{\Psi_1(2\alpha - x)}{\Psi_1(x)}, \quad f(x) = x\Psi_0(g^{-1}(x)) + \Psi_0(2\alpha - g^{-1}(x)).
\]
Suppose that $x = T/N$ for some positive integers $N \geq T$ and let $\zeta = g^{-1}(T/N)$, we can rewrite $-Nf(T/N)$ as the following
\[
\begin{aligned}
    -Nf(T/N) &= -N\left[\Psi_1(2\alpha - \zeta)\Psi_1(\zeta)^{-1}\Psi_0(\zeta) + \Psi_0(2\alpha - \zeta)\right]\\
    &= -N\left(1+\frac{\Psi_1(2\alpha - \zeta)}{\Psi_1(\zeta) }\right)\left[ \frac{\Psi_1(2\alpha - \zeta)}{\Psi_1(\zeta) + \Psi_1(2\alpha - \zeta)}\Psi_0(\zeta) + \frac{\Psi_1(\zeta)}{\Psi_1(\zeta) + \Psi_1(2\alpha-\zeta)}\Psi_0(2\alpha - \zeta)\right]\\
    &= \left(N+T\right)\left[ -  \frac{\Psi_1(2\alpha - \zeta)}{\Psi_1(\zeta) + \Psi_1(2\alpha - \zeta)}\Psi_0(\zeta) - \frac{\Psi_1(\zeta)}{\Psi_1(\zeta) + \Psi_1(2\alpha-\zeta)}\Psi_0(2\alpha - \zeta)\right].
\end{aligned}
\]
Let $F: (0,2\alpha) \rightarrow \R$ denote the shape function 
\[
F\left(\zeta \right) = -  \frac{\Psi_1(2\alpha - \zeta)}{\Psi_1(\zeta) + \Psi_1(2\alpha - \zeta)}\Psi_0(\zeta) - \frac{\Psi_1(\zeta)}{\Psi_1(\zeta) + \Psi_1(2\alpha-\zeta)}\Psi_0(2\alpha - \zeta).
\]

We will use the following propositions from \cite{basu2024temporal}.
Their proofs rely on the symmetry and concavity of $F$, together with a fourth-order Taylor expansion around its maximizer.

\begin{proposition}[{\cite[Proposition~3.2]{basu2024temporal}}]\label{prop:slope}
    There exists $C_1, C_2, \epsilon > 0$ such that for any $m \in [1-\epsilon,1+ \epsilon]$, we have
    \begin{equation}
        \left| g^{-1}(m)-\alpha- C_1(m-1)\right| \leq C_2(m-1)^2.
    \end{equation}
\end{proposition}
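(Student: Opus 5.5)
The plan is to use the inverse function theorem together with a second-order Taylor expansion of $g^{-1}$ around $m=1$. The first step is to identify the base point. Since $g(\alpha)=\Psi_1(\alpha)/\Psi_1(\alpha)=1$, we have $g^{-1}(1)=\alpha$. This accounts for the constant term $\alpha$ in the claimed expansion.

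The second step is to compute $g'(\alpha)$ and check that it is nonzero. Differentiating the quotient gives
\[
g'(x)=\frac{-\Psi_2(2\alpha-x)\Psi_1(x)-\Psi_1(2\alpha-x)\Psi_2(x)}{\Psi_1(x)^2},
\qquad\text{so}\qquad
g'(\alpha)=\frac{-2\Psi_2(\alpha)}{\Psi_1(\alpha)}.
\]
From the series representations $\Psi_1(x)=\sum_{k\ge0}(x+k)^{-2}>0$ and $\Psi_2(x)=-2\sum_{k\ge0}(x+k)^{-3}<0$ for $x>0$, it follows that $g'(\alpha)>0$. Since $g$ is smooth on $(0,2\alpha)$, the inverse function theorem shows that $g^{-1}$ is smooth near $m=1$, with $(g^{-1})'(1)=1/g'(\alpha)$. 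We therefore set
\[
C_1:=\frac{1}{g'(\alpha)}=-\frac{\Psi_1(\alpha)}{2\Psi_2(\alpha)}.
\]
In fact $g^{-1}$ is smooth on all of $(0,\infty)$, as already noted in the paper, but only a neighbourhood of $1$ is needed here.

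The third step is to choose $\epsilon>0$ so that $g^{-1}([1-\epsilon,1+\epsilon])$ lies in a compact interval $[\alpha-\eta,\alpha+\eta]\subset(0,2\alpha)$ on which $g'\ge g'(\alpha)/2>0$. On this interval
\[
(g^{-1})''(m)=-\frac{g''(g^{-1}(m))}{g'(g^{-1}(m))^3}
\]
is continuous and hence bounded, say by $2C_2$. Taylor's theorem with Lagrange remainder then gives
\[
\bigl|g^{-1}(m)-\alpha-C_1(m-1)\bigr|\le C_2(m-1)^2
\]
for all $m\in[1-\epsilon,1+\epsilon]$.

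There is no real obstacle in this argument. The only point that needs care is the sign check $\Psi_2(\alpha)<0$, which guarantees that $g'(\alpha)\neq0$ so that the inverse function theorem applies.
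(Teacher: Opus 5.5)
Your argument is correct. The key points are that $g(\alpha)=1$ and $g'(\alpha)=-2\Psi_2(\alpha)/\Psi_1(\alpha)>0$, and that $(g^{-1})''(m)=-g''(g^{-1}(m))/g'(g^{-1}(m))^3$ is bounded near $m=1$, so a Lagrange-remainder Taylor expansion of $g^{-1}$ about $1$ gives exactly the claimed inequality.

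The paper gives no proof of its own here. It simply imports the estimate, restated in terms of the ratio $m$ and the function $g$, from \cite[Proposition~3.2]{basu2024temporal}. Your self-contained inverse-function-theorem route makes explicit two things the citation leaves implicit:
\begin{enumerate}
\item It verifies the statement directly in this paper's parametrization, so no translation from the setup of \cite{basu2024temporal} is needed.
\item It pins down the constant, which is forced: the inequality can only hold with $C_1=(g^{-1})'(1)=-\Psi_1(\alpha)/(2\Psi_2(\alpha))$. Your sign check $\Psi_2(\alpha)<0$ then gives $C_1>0$.
\end{enumerate}
Positivity of $C_1$ is what the paper actually relies on downstream. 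It lower-bounds $|g^{-1}(m)-\alpha|$ by a positive multiple of $|m-1|$ in the proofs of Proposition~\ref{prop:shape_function} and Lemma~\ref{lem:shape_linear_bound}.
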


\begin{proposition}[{\cite[Proposition 3.3]{basu2024temporal}}]\label{prop:peak}
    For any $\zeta \in (-\alpha, \alpha)$, $F(\alpha) \geq F(\alpha+\zeta)$.
\end{proposition}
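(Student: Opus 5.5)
The plan is to avoid any Taylor expansion and instead use the variational characterization of $f$ that is built into its definition. For fixed $x\in(0,\infty)$ I introduce the auxiliary function
\[
h_x(z) \;=\; x\Psi_0(z) + \Psi_0(2\alpha - z), \qquad z\in(0,2\alpha),
\]
so that $f(x)=h_x(g^{-1}(x))$. Then $F(\alpha+\zeta)$ is just $f$ evaluated at the corresponding slope, normalized by $1+x$, and the inequality becomes the statement that a maximum dominates one particular value.

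\textbf{Step 1 ($f(x)=\max_z h_x(z)$).} Differentiating gives
\[
h_x'(z)=x\Psi_1(z)-\Psi_1(2\alpha-z),
\]
which vanishes exactly when $g(z)=x$, i.e. at $z=g^{-1}(x)$. This uses that $g$ is a bijection onto $(0,\infty)$ and that $\Psi_1>0$. The second derivative is
\[
h_x''(z)=x\Psi_2(z)+\Psi_2(2\alpha-z)<0,
\]
since $\Psi_2(u)=-\sum_{k\ge0}2/(u+k)^3<0$ for $u>0$. So $h_x$ is strictly concave on $(0,2\alpha)$, its unique critical point is the global maximizer, and $f(x)=\max_{z\in(0,2\alpha)}h_x(z)$.

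\textbf{Step 2 (rewrite $F$ in terms of $f$).} Let $\eta=\alpha+\zeta\in(0,2\alpha)$ and set $x=g(\eta)$, so $\eta=g^{-1}(x)$. The algebraic computation preceding the definition of $F$, which is valid for any $x>0$ and not only $x=T/N$, gives
\[
-f(x)=(1+x)F(\eta),
\]
that is, $F(\eta)=-f(x)/(1+x)$. Directly from the definition, $F(\alpha)=-\Psi_0(\alpha)$, because the two weights sum to $1$ and both $\Psi_0$ terms equal $\Psi_0(\alpha)$.

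\textbf{Step 3 (conclude).} Evaluating the maximum in Step 1 at the particular point $z=\alpha$ gives
\[
f(x)\ge h_x(\alpha)=(1+x)\Psi_0(\alpha).
\]
Dividing by $-(1+x)<0$ yields $F(\alpha+\zeta)=-f(x)/(1+x)\le -\Psi_0(\alpha)=F(\alpha)$, which is the claim.

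The only point needing care is Step 1: confirming that the critical point is a maximum rather than a minimum. This rests entirely on the sign $\Psi_2<0$, which follows from the series representation of the polygamma functions. Everything else is algebra already carried out in the text.
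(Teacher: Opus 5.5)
Your proof is correct, but it takes a different route from the one the paper points to. The paper gives no argument of its own here. It cites \cite[Proposition~3.3]{basu2024temporal} and remarks that the proof there rests on the symmetry $F(\alpha+\zeta)=F(\alpha-\zeta)$ together with concavity, so that the symmetric point $\zeta=0$ (the diagonal direction) is forced to be the maximizer.

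You bypass both symmetry and any concavity of $F$ itself. You only use that $h_x(z)=x\Psi_0(z)+\Psi_0(2\alpha-z)$ is concave (from $\Psi_2<0$), so its critical point $g^{-1}(x)$ is a global maximizer. You then test the maximum at $z=\alpha$ and transfer the result through $-f(x)=(1+x)F(g^{-1}(x))$.

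Unwound, your argument is a weighted tangent-line inequality. Set $w=\Psi_1(2\alpha-\eta)/(\Psi_1(\eta)+\Psi_1(2\alpha-\eta))$. Concavity of $\Psi_0$ gives $\Psi_0(\alpha)\le\Psi_0(\eta)+\Psi_1(\eta)(\alpha-\eta)$ and $\Psi_0(\alpha)\le\Psi_0(2\alpha-\eta)+\Psi_1(2\alpha-\eta)(\eta-\alpha)$. Averaging these with weights $w$ and $1-w$, the first-order terms cancel precisely because of the choice of $w$. This leaves $-F(\eta)=w\Psi_0(\eta)+(1-w)\Psi_0(2\alpha-\eta)\ge\Psi_0(\alpha)=-F(\alpha)$.

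What your route buys:
\begin{itemize}
\item It is a short, fully self-contained verification from the explicit formula.
\item It holds on the whole interval $(-\alpha,\alpha)$.
\item Strict inequality for $\zeta\neq0$ comes for free from strict concavity of $h_x$.
\end{itemize}

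The symmetry/concavity route is more structural. It explains the inequality as a property of the limit shape along the anti-diagonal rather than of the particular digamma expressions. It also sits naturally alongside the quantitative curvature estimate of Proposition~\ref{prop:shape} from the same source, which the qualitative inequality alone does not provide.
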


\begin{proposition}[{\cite[Proposition 3.4]{basu2024temporal}}]\label{prop:shape}
    There exists constants $C_3, C_4, \epsilon > 0$ such that for any $z \in [-\epsilon, \epsilon]$, we have
    \begin{equation}
        \left|F(\alpha + z) - F(\alpha) + C_3z^2 \right| \leq C_4z^4.
    \end{equation}
\end{proposition}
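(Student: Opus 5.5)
The plan is a direct Taylor expansion of $F$ around $\zeta=\alpha$, using two structural facts: $F$ is symmetric about $\alpha$, and its second derivative at $\alpha$ is explicit and strictly negative. I set $C_3=-\tfrac12F''(\alpha)$ and take $C_4$ from a bound on the fourth derivative on a compact neighbourhood of $\alpha$.

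\emph{Step 1: smoothness and symmetry.} The polygamma functions $\Psi_0,\Psi_1$ are real-analytic on $(0,\infty)$, and $\Psi_1>0$ there. Hence $F$ is real-analytic on $(0,2\alpha)$. Under $\zeta\mapsto 2\alpha-\zeta$ the two weights
\[
w(\zeta)=\frac{\Psi_1(2\alpha-\zeta)}{\Psi_1(\zeta)+\Psi_1(2\alpha-\zeta)}, \qquad 1-w(\zeta),
\]
are exchanged, and so are $\Psi_0(\zeta)$ and $\Psi_0(2\alpha-\zeta)$. Therefore $F(\alpha+z)=F(\alpha-z)$, so all odd derivatives of $F$ vanish at $\alpha$. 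Fix $\epsilon=\alpha/2$. Taylor's theorem with Lagrange remainder gives
\[
F(\alpha+z)=F(\alpha)+\tfrac12F''(\alpha)z^2+\tfrac1{24}F^{(4)}(\xi)z^4
\]
for some $\xi$ between $\alpha$ and $\alpha+z$. I then take $C_4=\tfrac1{24}\sup_{[\alpha/2,3\alpha/2]}|F^{(4)}|$, which is finite by smoothness on this compact set.

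\emph{Step 2: compute $F''(\alpha)$.} Write $u=\zeta-\alpha$ and rewrite $F$ as
\[
F=-\tfrac12\bigl(\Psi_0(\alpha+u)+\Psi_0(\alpha-u)\bigr)-\bigl(w-\tfrac12\bigr)\bigl(\Psi_0(\alpha+u)-\Psi_0(\alpha-u)\bigr).
\]
Expand each piece to second order in $u$, with all polygammas below evaluated at $\alpha$:
\begin{itemize}
\item the first bracket is $2\Psi_0+\Psi_2u^2+O(u^4)$;
\item the difference $\Psi_0(\alpha+u)-\Psi_0(\alpha-u)$ is $2\Psi_1u+O(u^3)$;
\item since the denominator of $w$ is $2\Psi_1+O(u^2)$ and its numerator is $\Psi_1-\Psi_2u+O(u^2)$, we get $w-\tfrac12=-\frac{\Psi_2}{2\Psi_1}u+O(u^3)$.
\end{itemize}
Combining these,
\[
F(\alpha+u)=-\Psi_0(\alpha)-\tfrac12\Psi_2(\alpha)u^2+\Psi_2(\alpha)u^2+O(u^4)=F(\alpha)+\tfrac12\Psi_2(\alpha)u^2+O(u^4).
\]
Thus $C_3=-\tfrac12\Psi_2(\alpha)$.

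\emph{Step 3: sign of $C_3$.} From the series
\[
\Psi_2(x)=-2\sum_{k\ge0}(x+k)^{-3}<0,
\]
we get $C_3>0$, as required.

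The main point to get right is Step 2. The correction term coming from $w-\tfrac12$ contributes $-\Psi_2u^2$ and flips the sign relative to the naive average term. If this term were dropped, one would wrongly conclude that $F$ has a local minimum at $\alpha$. So I would double-check this expansion carefully, for instance by differentiating $F$ twice directly at $\alpha$ and using $w(\alpha)=\tfrac12$ and $w'(\alpha)=-\Psi_2(\alpha)/(2\Psi_1(\alpha))$. The remaining steps are routine.
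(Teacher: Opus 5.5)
Your proof is correct and follows essentially the route the paper attributes to the cited reference: symmetry of $F$ about $\alpha$ kills the odd derivatives, and a fourth-order Taylor expansion with Lagrange remainder on the compact interval $[\alpha/2,3\alpha/2]\subset(0,2\alpha)$ gives the $z^4$ error. Your value $F''(\alpha)=\Psi_2(\alpha)<0$, so $C_3=-\tfrac12\Psi_2(\alpha)>0$, is correct; it can be cross-checked by writing $F=G\circ w$, where $G$ is the normalized shape function, $G'(1/2)=0$ and $G''(1/2)=4\Psi_1(\alpha)^2/\Psi_2(\alpha)$, and this explicit sign gives the strict nondegeneracy that concavity alone would not.
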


Now we are ready to prove Proposition \ref{prop:shape_function}.
\begin{proof}[Proof of Proposition~\ref{prop:shape_function}]
It is not difficult to see that either $(2i - \tilde{x} - \tilde{s})$ or $(\tilde{y} + \tilde{t}-2i)$ is larger than or equal to $(\tilde{t} + \tilde{y}- \tilde{s} - \tilde{x})/{2}$. Without loss of generality, let 
\[
2i - \tilde{x} - \tilde{s} \geq (\tilde{t} + \tilde{y}- \tilde{s} - \tilde{x})/{2}.
\]
Therefore, we know that 
\[
\frac{i-\tilde{x}}{i - \tilde{s}} = 1 - \frac{N}{i-\tilde{s}}N^{-1/3+\delta} + \mathcal{O}(N^{-1/3}) > \xi
\]
for $N$ large enough. By Proposition \ref{prop:slope}, 
\[
\left|g^{-1}\left(\frac{i-\tilde{s}}{i-\tilde{x}}\right) - \alpha \right| \geq \frac{C_1}{2}\frac{N}{i-\tilde{s}}N^{-1/3 + \delta} \geq \frac{C_1}{4b}N^{-1/3+\delta}.
\]
Moreover, by Proposition \ref{prop:shape},
\[
\left|F\left(g^{-1}\left(\frac{i-\tilde{x}}{i-\tilde{s}}\right)\right) - F(\alpha)\right| \geq \frac{C_3}{2}\left|g^{-1}\left(\frac{i-\tilde{x}}{i-\tilde{s}}\right) - \alpha \right|^2 \geq \frac{C_3C_1^2}{32b}
N^{-2/3 + 2\delta}.
\]
By Proposition \ref{prop:peak}, we can then conclude that
\[
(2i - \tilde{x} - \tilde{s})\left[ F(\alpha) -F\left(g^{-1}\left(\frac{i-\tilde{x}}{i-\tilde{s}}\right)\right) \right] \geq \frac{C_3C_1^2(\tilde{t} + \tilde{y} - \tilde{x} - \tilde{s})}{64b}
N^{-2/3 + 2\delta} \geq \frac{C_3C_1^2}{64b^2}
N^{1/3 + 2\delta}
\]
\[
(\tilde{y} + \tilde{t} - 2i)\left[F(\alpha)- f(\tilde{y} - i, \tilde{t} - i)\right] \geq 0
\]
because for $N$ large enough
\[
\tilde{t} + \tilde{y} - \tilde{x} - \tilde{s} \geq 2b^{-1}N - N^{2/3}bq^{-2} \geq b^{-1}N.
\]
This completes our proof as
\[
F(\alpha) = -\Psi_0(\alpha), \quad(2i - \tilde{x} - \tilde{s})F\left(g^{-1}\left(\frac{i-\tilde{x}}{i-\tilde{s}}\right)\right) = f(i-\tilde{x}, i - \tilde{s}).
\]

\end{proof}

\section{Proof of Proposition \ref{prop:uniform_upper_tail}}\label{sec:proof of prop}
\subsection{An identity in distribution between full-space and half-space log-gamma polymer}
We first introduce the distributional identity between the full-space point-to-point partition function and the half-space point-to-line partition function \cite[Theorem 1.4]{barraquand2023identity}. Using this identity, we reduce the proof of Proposition \ref{prop:uniform_upper_tail} to its full-space analogue.


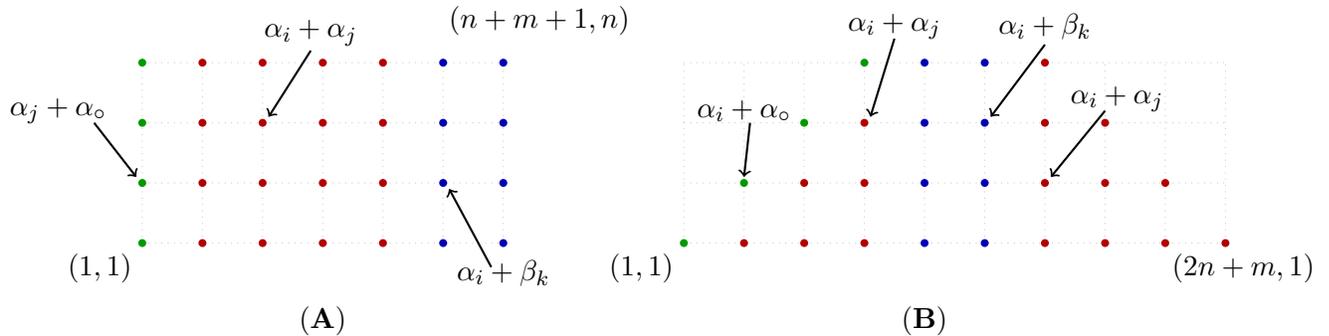
\begin{figure}[t]
\centering
\begin{tikzpicture}[
    scale=0.8,
    dot/.style={circle,inner sep=0pt,minimum size=3pt},
    greenpt/.style={dot,fill=green!60!black},
    redpt/.style={dot,fill=red!70!black},
    bluept/.style={dot,fill=blue!70!black}
]

\begin{scope}
  \foreach \i in {0,...,6}{
    \draw[gray!40,dotted] (\i,0) -- (\i,3);
  }
  \foreach \j in {0,...,3}{
    \draw[gray!40,dotted] (0,\j) -- (6,\j);
  }

  \foreach \j in {0,...,3}{
    \node[greenpt] at (0,\j) {};
  }

  \foreach \i in {1,...,4}{
    \foreach \j in {0,...,3}{
      \node[redpt] at (\i,\j) {};
    }
  }

  \foreach \i in {5,6}{
    \foreach \j in {0,...,3}{
      \node[bluept] at (\i,\j) {};
    }
  }

  \node[below left] at (0,0) {$(1,1)$};
  \node[above] at (6.6,3.3) {$(n+m+1,n)$};

  \node at (-1.4,2.2) {$\alpha_j+\alpha_{\circ}$};
  \draw[->,thick] (-0.8,2.0) -- (-0.1,1.1);

  \node at (2.8,3.5) {$\alpha_i+\alpha_j$};
  \draw[->,thick] (2.8,3.2) -- (2.1,2.1);

  \node at (6,-0.5) {$\alpha_i+\beta_k$};
  \draw[->,thick] (5.8,-0.4) -- (5.1,0.9);

  \node at (3,-1.3) {(\textbf{A})};
\end{scope}

\begin{scope}[xshift=9cm]
  \foreach \i in {0,...,9}{
    \draw[gray!40,dotted] (\i,0) -- (\i,3);
  }
  \foreach \j in {0,...,3}{
    \draw[gray!40,dotted] (0,\j) -- (9,\j);
  }

  \foreach \p in {(0,0),(1,1),(2,2),(3,3)}{
    \node[greenpt] at \p {};
  }

  \foreach \p in {(1,0),(2,0),(2,1),(3,0),(3,1),(3,2),(6,1),(6,2),(6,3),(6,0),(7,1),(7,2),(7,0),(8,1),(9,0),(8,0)}{
    \node[redpt] at \p {};
  }

  \node[below left] at (0,0) {$(1,1)$};
  \node[below] at (9.3,0) {$(2n+m,1)$};

  \foreach \i in {4,5}{
    \foreach \j in {0,...,3}{
      \node[bluept] at (\i,\j) {};
    }
  }

  \node at (1,2.2) {$\alpha_i+\alpha_{\circ}$};
  \draw[->,thick] (1.1,2) -- (1,1.1);

  \node at (3.5,3.6) {$\alpha_i+\alpha_j$};
  \draw[->,thick] (3.5,3.4) --(3.1,2.1);

  \node at (6,3.6) {$\alpha_i+\beta_k$};
  \draw[->,thick] (6, 3.3) --(5.1,2.1);

  \node at (7.2,2.4) {$\alpha_i+\alpha_j$};
  \draw[->,thick] (7.0,2.2) --(6.1,1.1);

  \node at (4,-1.3) {(\textbf{B})};
\end{scope}

\end{tikzpicture}
\caption{Parameter configuration for the Barraquand--Wang log-gamma polymers:
(A) full-space point-to-point geometry, (B) octant/trapezoidal geometry.}
\label{fig:BW-geometry}
\end{figure}

\begin{theorem}\label{thm:BW}
Fix $n,m\in\mathbb{N}$, and parameters
\[
\alpha^{\circ}>0,\qquad 
\alpha=(\alpha_1,\dots,\alpha_n)\in(\mathbb{R}_{>0})^n,\qquad
\beta=(\beta_1,\dots,\beta_m)\in(\mathbb{R}_{>0})^m,
\]
satisfying the compatibility conditions
\[
\alpha_i+\alpha^{\circ}>0,\qquad
\alpha_i+\alpha_j>0,\qquad
\alpha_i+\beta_k>0,
\]
for all $1\leq i,j \leq n$ and $1 \leq k \leq m$. Let all polymer weights be independent inverse-gamma 
random variables with the shape parameters described below.

\paragraph{Full-space point-to-point model.}
Consider the following rectangular domain
\[
\{(i,j): 1 \leq i \leq n+m+1, 1 \leq j \leq n\}.
\]
For $(i,j)$ in this rectangular domain, set
\[
W_{i,j}\sim
\begin{cases}
\mathrm{Gamma}^{-1}(\alpha_j+\alpha^{\circ}), & i=1,\\[3pt]
\mathrm{Gamma}^{-1}(\alpha_{i-1}+\alpha_{j}),   & 2\le i\le n+1,\\[3pt]
\mathrm{Gamma}^{-1}(\alpha_j+\beta_{i-n-1}), & n+2\le i\le n+m+1,
\end{cases}
\]
all independent and define the point-to-point partition function
\[
Z_1(n+m+1;n)
=\sum_{\pi\in\Pi[(1,1)\to(n+m+1,n)]}
\prod_{(i,j)\in\pi} W_{i,j}.
\]

\paragraph{Half-space point-to-line model.}
Consider the following trapezoidal domain
\[
\bigl\{(i,j)\in\mathbb{Z}^2: 
1\le j\le n,\ 
j\le i\le 2n+m-j+1 \bigr\}.
\]
For $(i,j)$ in this trapezoidal domain, set
\[
M_{i,j}\sim
\begin{cases}
\mathrm{Gamma}^{-1}(\alpha_i+\alpha^{\circ}), & 1\le i=j\le n,\\[3pt]
\mathrm{Gamma}^{-1}(\alpha_i+\alpha_j),      & 1\le j<i\le n,\\[3pt]
\mathrm{Gamma}^{-1}(\alpha_j+\beta_{i-n}),   & 1\le j\le n,\ n<i\le n+m,\\[3pt]
\mathrm{Gamma}^{-1}(\alpha_j+\alpha_{2n+m-i+1}), 
& 1\le j\le n,\ n+m<i\le 2n+m-j+1.
\end{cases}
\]
all independent and define the half-space point-to-line partition function
\[
Z_2(n+m+1;n)=\sum_{k=1}^n \sum_{\pi \in \Pi_{\HH}[(1,1) \to (2n-k+m+1,k)]} \prod_{(i,j) \in \pi} M_{i,j}
\]
With the above parameterization of inverse-gamma weights, the half-space
point-to-line partition function has the same law as the 
full-space point-to-point partition function:
\[
Z_1(n+m+1;n)\;\stackrel{d}{=}\;Z_2(n+m+1;n).
\]
\end{theorem}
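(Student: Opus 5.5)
The plan is to prove the identity at the level of Laplace transforms, by computing both sides as integrals of $\mathfrak{gl}_n$ Whittaker functions through geometric RSK. This is the strategy of \cite{barraquand2023identity}, which I would follow rather than look for a path-level bijection. Throughout, write $\Gamma(\cdot)$ for the Gamma function and $\psi_\lambda$ for the class-one $\mathfrak{gl}_n$ Whittaker function.

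First I would treat the full-space side $Z_1$. Apply the geometric RSK correspondence (Kirillov, Noumi--Yamada) to the $n\times(n+m+1)$ array $(W_{i,j})$, reading it column by column. The bottom-right entry of the output pattern is $Z_1(n+m+1;n)$. By Corwin--O'Connell--Sepp\"al\"ainen--Zygouras, the shape $x\in\R^n$ of the output is distributed according to a Whittaker measure. Its density is proportional to
\[
e^{-e^{-x_n}}\,\psi_{\alpha}(x)\,\psi_{(\alpha^\circ,\alpha_1,\dots,\alpha_n,\beta)}(x)-\text{type product},
\]
where the inhomogeneous column parameters $\alpha^\circ,\alpha_1,\dots,\alpha_n,\beta_1,\dots,\beta_m$ enter through the Whittaker function on the column side, and $Z_1=e^{x_1}$. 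Integrating out $x$ against $e^{-u e^{-x_1}}$ by the Whittaker--Plancherel theorem should give a contour-integral formula for $\E[e^{-u/Z_1}]$. This formula involves a Sklyanin-measure integral over $\lambda\in(i\R)^n$ with Gamma factors $\prod_{i,j}\Gamma(\alpha_i+\lambda_j)\prod_{j}\Gamma(\alpha^\circ-\lambda_j)\prod_{j,k}\Gamma(\beta_k-\lambda_j)$.

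Second, I would treat the half-space point-to-line side $Z_2$. The triangular part of the trapezoid ($1\le j\le i\le n$, with diagonal parameters $\alpha_i+\alpha^\circ$) is half of a symmetric matrix. Here I would use the symmetric geometric RSK of O'Connell--Sepp\"al\"ainen--Zygouras. After this step, the law of the shape is a half-space Whittaker measure with density proportional to $\psi_{\alpha}(x)\,e^{-\alpha^\circ\text{-twisted boundary term}}$. The next $m$ columns, with parameters $\alpha_j+\beta_k$, are then inserted by ordinary geometric row insertion, which multiplies in the factors $\prod_{j,k}\Gamma(\alpha_j+\beta_k)$-type kernels. The final reflected block, with parameters $\alpha_j+\alpha_{2n+m-i+1}$, must implement the point-to-line sum over the end points $(2n-k+m+1,k)$. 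For this I would use the standard observation that a point-to-line partition function over a triangle equals the point-to-point quantity for the reversed, symmetrized array. This step should again produce an explicit Whittaker integral for $\E[e^{-u/Z_2}]$.

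Finally, I would match the two Laplace transforms. Because the parameters are matched, the Gamma-factor integrands should agree once an integral identity for Whittaker functions is applied, namely the Bump--Stade/Ishii--Stade identity
\[
\int \psi_\lambda(x)\,e^{-e^{-x_n}}\prod_j e^{\alpha^\circ x_j}\,dx=\prod_j\Gamma(\alpha^\circ-\lambda_j)\prod_{i<j}\Gamma(-\lambda_i-\lambda_j),
\]
or its $\mathfrak{gl}_n$ variant. Equality of the Laplace transforms of the positive variables $1/Z_1$ and $1/Z_2$ for all $u>0$ then gives $Z_1\stackrel{d}{=}Z_2$.

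The main obstacle is this final matching step. It requires choosing the correct integral identity so that the factor $\prod_{i\le j}\Gamma(\alpha_i+\alpha_j)$ from the half-space side cancels exactly against the extra column $\alpha^\circ$ and the block $\alpha_{i-1}+\alpha_j$ on the full-space side. It also requires handling convergence of the contour integrals under only the stated compatibility conditions, i.e.\ positivity of the pairwise sums $\alpha_i+\alpha^\circ$, $\alpha_i+\alpha_j$, $\alpha_i+\beta_k$ rather than of each parameter individually. I would first prove the identity when all $\alpha_i,\beta_k,\alpha^\circ>0$ and then extend it by analytic continuation in the parameters.
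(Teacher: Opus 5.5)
The paper does not prove this statement; it imports it from \cite[Theorem~1.4]{barraquand2023identity}. Your sketch therefore has to stand on its own, and it does not yet: the two steps that carry the proof are missing.

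\textbf{The point-to-line end of $Z_2$.} Your ``standard observation'' is a zero-temperature (Baik--Rains type) fact, and it fails at positive temperature. Reflect the last triangle across the line $i+j=2n+m+1$ and take the point-to-point partition function of the symmetrized array. Every path crosses that line at exactly one cell $e_k=(2n+m+1-k,k)$, and the part after the crossing reflects to a second path from $(1,1)$ to $e_k$. What you get is therefore $\sum_k Z_{e_k}^2/M_{e_k}$, where $Z_{e_k}=\sum_{\pi\in\Pi_{\HH}[(1,1)\to e_k]}\prod_{(i,j)\in\pi}M_{i,j}$. This is not $\sum_k Z_{e_k}=Z_2(n+m+1;n)$.

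Symmetric gRSK on the first triangle, followed by row insertion of the $m$ middle columns, controls point-to-point quantities on a rectangle. Neither step handles the truncated columns of the last triangle or the sum over the endpoint line. That needs a separate volume-preserving gRSK-type map adapted to the point-to-line geometry, as in the treatments of point-to-line log-gamma polymers by O'Connell--Sepp\"al\"ainen--Zygouras (triangular arrays) and Bisi--Zygouras (half-space). There the Laplace transform involves orthogonal ($\mathfrak{so}_{2n+1}$-type) Whittaker functions, not a $\mathfrak{gl}_n$ point-to-point formula. Without this ingredient you have no formula for the law of $Z_2$ at all.

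\textbf{The matching step.} You flag this one yourself. It is the actual content of the theorem, and as set up it would not work, for two reasons.

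\emph{Your formula for $Z_1$ is incomplete.} A COSZ-type formula for an $n\times(n+m+1)$ array contains $n^2+n(n+m+1)$ Gamma factors. There is one family for the rows, and the $n+m+1$ column parameters all enter with the sign of $\lambda$ opposite to the rows. Your list has only $n^2+n(m+1)$ factors. The family you dropped comes from the columns $2\le i\le n+1$, which is exactly the family the half-space side must reproduce.

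\emph{The identity you quote cannot hold as written.} Since $\psi_\lambda(x)e^{c(x_1+\cdots+x_n)}=\psi_{\lambda+c}(x)$, a uniform twist $\prod_j e^{\alpha^\circ x_j}$ merely shifts $\lambda$, so $\alpha^\circ$ would also enter the pair factors. Already for $n=1$, $\int_{\R}e^{-e^{-x}}e^{\pm\lambda x+\alpha^\circ x}\,dx$ equals $\Gamma(\mp\lambda-\alpha^\circ)$, never $\Gamma(\alpha^\circ-\lambda)$. In the symmetric setting the diagonal weights enter through an alternating functional of the shape; the zero-temperature shadow is that the trace of a symmetric matrix equals $\lambda_1-\lambda_2+\lambda_3-\cdots$ for its RSK shape. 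A different integral identity is therefore needed.

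\textbf{Two smaller points.} First, for a Whittaker measure carrying $e^{-e^{-x_n}}$, the tractable transform is $\E[e^{-uZ_1}]$, i.e.\ integration against $e^{-ue^{x_1}}$, not $e^{-ue^{-x_1}}$. Second, the analytic-continuation step is unnecessary, since the statement assumes $\alpha^\circ,\alpha_i,\beta_k>0$.

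So the general framework is the natural one, but you supply neither the law of the half-space point-to-line partition function nor the Whittaker identity that matches it to the full-space formula.
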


By Theorem \ref{thm:BW}, we see that the following proposition implies Proposition \ref{prop:uniform_upper_tail}. 

\begin{proposition}\label{prop:inhomo_full_space}
    Fix $\alpha,\kappa > 0$ and $\theta \geq 0$. Let $Z_\theta(N,T)$ be the full-space log-gamma point-to-point partition function from
$(1,1)$ to $(N,T)$ with independent weights
$W_{i,j}\sim\mathrm{Gamma}^{-1}(A_i+B_j)$, where
$A_1=\theta$, $A_2=\cdots=A_N=\alpha$, and
$B_1=\cdots=B_T=\alpha$. There exists some positive constants $N_0, C, c$ such that for all $N \geq N_0$ and $T \geq 1$ satisfying $T/N \in (0, 1-\kappa N^{-1/3+\delta}]$ and all $x \geq 0$, we have
    \begin{equation}
        \Prob\left(\log Z_\theta(N,T) + Nf\left(T/N\right) \geq xN^{1/3}\right) \leq CNe^{-c \min \{N^{2\delta}, N^{1/3}\}} + CNe^{-cx^2}.
    \end{equation}  
\end{proposition}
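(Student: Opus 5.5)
The plan is to compare $Z_\theta(N,T)$ with a homogeneous full-space partition function and then use the upper-tail bound of \cite{barraquand2021fluctuations}. I would work in the characteristic direction, where $-Nf(T/N)$ is the law of large numbers. Decompose according to the row at which the path leaves the first column. Every path from $(1,1)$ to $(N,T)$ uses column $1$ on an initial segment $\{(1,1),\dots,(1,k)\}$ and then steps to $(2,k)$. Hence
\[
Z_\theta(N,T)=\sum_{k=1}^{T}\Bigl(\prod_{j=1}^{k}W_{1,j}\Bigr)\, Z^{\alpha}\bigl((2,k)\to(N,T)\bigr),
\]
where $Z^\alpha$ uses the homogeneous $\mathrm{Gamma}^{-1}(2\alpha)$ weights in columns $2,\dots,N$. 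The column-$1$ weights are $\mathrm{Gamma}^{-1}(\theta+\alpha)$. Since $\theta\ge 0$, the prefactor $\prod_{j\le k}W_{1,j}$ has typical size $e^{-k\Psi_0(\alpha+\theta)}\le e^{-k\Psi_0(\alpha)}$. So the boundary column is no better than a bulk column. The model is then stochastically comparable to a homogeneous one, with an error only from the fluctuations of a sum of $k$ i.i.d.\ log-inverse-gamma variables.

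Next I would take a union bound over $k$, which gives the factor $N$. For each $k$ I need two estimates. First, a Cramér (sub-exponential) bound for $\sum_{j\le k}\log W_{1,j}$ exceeding $-k\Psi_0(\alpha+\theta)+y$. Second, the homogeneous full-space upper-tail bound from \cite{barraquand2021fluctuations}: $\Prob(\log Z^\alpha((2,k)\to(N,T))+(N-1)f((T-k+1)/(N-1))\ge xN^{1/3})\le Ce^{-cx^{3/2}}$, or at least $Ce^{-cx^2}$ in a weakened form, valid uniformly for slopes in a compact subset of $(0,\infty)$. For very flat slopes I would use a crude moment bound instead. The deterministic input is the shape-function inequality
\[
-k\Psi_0(\alpha)-(N-1)f\Bigl(\tfrac{T-k+1}{N-1}\Bigr)\le -Nf(T/N)+O(1)\quad\text{for all } k .
\]
This should follow from concavity of $(x,y)\mapsto -yf(x/y)$, i.e. the superadditivity of the full-space free energy, together with the fact that a column with parameter $\alpha$ contributes exactly the $-\Psi_0(\alpha)$ per step that is encoded by $F(\alpha)=-\Psi_0(\alpha)$ at the characteristic slope. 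More precisely, I would check that $\partial_y[-yf(x/y)]\ge-\Psi_0(\alpha)$ whenever $x/y<1$. Via $g^{-1}$ this reduces to $\Psi_0(2\alpha-\zeta)\le\Psi_0(\alpha)$ for $\zeta=g^{-1}(T/N)>\alpha$, which holds because $\Psi_0$ is increasing.

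The main obstacle is the range of the slope near $1$. The condition $T/N\le 1-\kappa N^{-1/3+\delta}$ must produce a gap. When $T/N$ is within $N^{-1/3+\delta}$ of $1$, the derivative inequality above becomes nearly an equality, of order $\zeta-\alpha\asymp N^{-1/3+\delta}$ by Proposition~\ref{prop:slope}. Over $k$ steps this gain must beat the Gaussian fluctuation $\sqrt{k}$ of the boundary sum and the $N^{1/3}$ fluctuation of the bulk. I would split into two cases. If $k\le N^{2/3-\delta}$, the boundary sum is negligible by direct sub-exponential tails. If $k\ge N^{2/3-\delta}$, the deterministic gain $k(\Psi_0(\alpha)-\Psi_0(2\alpha-\zeta))\gtrsim kN^{-1/3+\delta}$ absorbs the deviation of the boundary sum, and the Cramér tail at that level gives $e^{-c\min\{N^{2\delta},N^{1/3}\}}$. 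The term $CNe^{-cx^2}$ then comes from the bulk upper tail, and the $\min$ term comes from the boundary fluctuation. The delicate part is making the homogeneous upper-tail bound uniform in the slope $(T-k+1)/(N-1)$ as it ranges over $(0,1)$. Where the slope is not bounded away from $0$, I would fall back on the first-moment bound $\E Z$, computed via the stationary (Burke) log-gamma polymer, to handle those $k$.
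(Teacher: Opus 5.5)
Your skeleton matches the paper's proof. You split by the row $k$ at which the path leaves column~$1$, take a union bound over $k$, apply Bernstein to $\sum_{j\le k}\log W_{1,j}$, and use Theorem~\ref{thm:full-homogeneous} when the remaining slope is at least $\xi$. Thin remainders get a moment bound, and there is a linear gain $\gtrsim kN^{-1/3+\delta}$ from Proposition~\ref{prop:slope}. Handling $\theta\ge 0$ directly through $\Psi_0(\alpha+\theta)\ge\Psi_0(\alpha)$, instead of the paper's stochastic-domination reduction to $\theta=0$, is fine.

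There are two slips.
\begin{itemize}
\item $g$ is increasing with $g(\alpha)=1$, so $\zeta=g^{-1}(T/N)<\alpha$ when $T<N$. Hence your check $\partial_y[-yf(x/y)]=-\Psi_0(2\alpha-\zeta)\ge-\Psi_0(\alpha)$ is false. The boundary column consumes vertical steps, i.e.\ the variable $x\leftrightarrow T$. The correct check is $\partial_x[-yf(x/y)]=-\Psi_0(\zeta)>-\Psi_0(\alpha)$, and concavity then gives the gain $(k-1)(\Psi_0(\alpha)-\Psi_0(\zeta))-O(1)$.
\item $e^{-cx^2}$ is stronger than $e^{-cx^{3/2}}$, not a weakened form of it. Theorem~\ref{thm:full-homogeneous} only delivers $CNe^{-cx^{3/2}}$, and since the upper tail is of Tracy--Widom type one should not expect $x^2$. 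The paper's Case~2 makes the same jump. The exponent $x^{3/2}$ is harmless for the application, where $x$ is a power of $N$.
\end{itemize}

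The genuine gap is the thin regime, and it cannot be closed because the proposition is false there (for any $\delta<1/3$).
\begin{itemize}
\item \emph{Counterexample.} Take $T=1$. Then $\log Z_\theta(N,1)$ is a sum of $N$ independent terms with Gaussian fluctuations of size $\sqrt{N\Psi_1(2\alpha)}$. Since $g^{-1}(u)\sim\sqrt{u/\Psi_1(2\alpha)}$ as $u\to0$, one gets $-Nf(1/N)=-N\Psi_0(2\alpha)+2\sqrt{N\Psi_1(2\alpha)}+O(1)$. With $x=N^{1/6}$, the left side converges to $\Prob\bigl(G\ge 2+\Psi_1(2\alpha)^{-1/2}\bigr)>0$ for $G$ standard Gaussian, while the right side tends to $0$.
\item \emph{Why no bound helps when $T/N$ is small.} Every $k\le T$ then leaves a thin remainder and there is no deterministic gain to absorb anything.
\item \emph{Your fallback is incomplete even for $T/N\ge\xi_0>0$.} An annealed bound via $\E Z$, or a fractional moment, overshoots the quenched free energy by order $N$. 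Near $T/N=1$ your tangent-plane gain is only $kN^{-1/3+\delta}\le N^{2/3+\delta}$, so it cannot absorb this. You need a global order-$N$ gap. For $\varepsilon\le\alpha$, superadditivity of $-Nf(T/N)$ gives
\[
-Nf(T/N)+k\Psi_0(\alpha)+(N+T-k)\Psi_0(2\alpha-\varepsilon)\ \ge\ T\bigl(\Psi_0(2\alpha)-\Psi_0(\alpha)\bigr)-N\bigl(\Psi_0(2\alpha)-\Psi_0(2\alpha-\varepsilon)\bigr),
\]
which is of order $N$ once $\varepsilon$ is small relative to $\xi_0$. Alternatively, use a quenched comparison with the stationary polymer, which is exact at leading order with $O(\sqrt N)$ fluctuations.
\item \emph{The paper's proof has the same hole.} Lemma~\ref{lem:shape_rect_bound} is meant to supply this gap, but it is false for small $T/N$: at $T=k=1$ it would require $\Psi_0(3\alpha/2)-\Psi_0(2\alpha)\ge C_0>0$. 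Its proof uses the lower bound $|F(\alpha)-F(g^{-1}(T/N))|\ge C_2N^{-2/3+2\delta}$ as if it were an upper bound.
\end{itemize}

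What your outline, and the paper's, can actually prove restricts to $T/N\in[\xi_0,\,1-\kappa N^{-1/3+\delta}]$, with constants depending on $\xi_0$, and has $x^{3/2}$ in place of $x^2$.
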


This is because if we specialize the parameters in Theorem \ref{thm:BW}:
\[
\alpha^{\circ} = \theta; \quad \alpha_1 = \alpha_2 = \cdots = \beta_1 = \beta_2 = \cdots = \alpha
\]
Then
\begin{equation}
    \begin{aligned}
        &\Prob\left( \log \Zhalf(N,T)  \geq x\right) \leq \Prob\left( \log Z_2(N;T) \geq x \right) = \Prob\left( \log Z_\theta(N,T) \geq x\right).  
    \end{aligned}
\end{equation}

\subsection{Homogeneous full-space estimates}
Before proving Proposition \ref{prop:inhomo_full_space}, we need few results from the homogeneous full-space model. We want to first introduce the following homogeneous full-space upper-tail estimate, proved in \cite[Theorem~1.7]{barraquand2021fluctuations} via a steepest-descent analysis of the associated Fredholm determinant.
\begin{theorem}\label{thm:full-homogeneous}
    Fix any $\alpha > 0$ and $\xi \in (0,1)$. There exists some positive constants $N_0, C_1,C_2, c_1,c_2$ depending only on $\alpha,\xi$ such that for all $N \geq T \geq N_0$ satisfying $T/N \in [\xi, 1]$ and all $x \geq 0$, we have
    \begin{equation}
        \Prob\left(\log \Zfull(N,T) + Nf\left(T/N\right) \geq xN^{1/3}\right) \leq C_1e^{-c_1N} + C_2 e^{-c_2x^{3/2}}.
    \end{equation}  
\end{theorem}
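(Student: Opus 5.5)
The plan is to go through the Fredholm determinant formula for the Laplace transform of the homogeneous log-gamma partition function, as in \cite{barraquand2021fluctuations}, and run a steepest-descent analysis that is uniform in the slope $T/N\in[\xi,1]$.

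\textbf{Step 1: reduce the tail to a Laplace transform.} For $u>0$ and $r\in\R$ we have the elementary inequality
\[
1-\E\bigl[e^{-uZ}\bigr]\ \ge\ (1-e^{-1})\,\Prob(uZ\ge 1).
\]
Taking $u=e^{-r}$ with $r=-Nf(T/N)+xN^{1/3}$ gives
\[
\Prob\bigl(\log \Zfull(N,T)\ge r\bigr)\le C\bigl(1-\E[e^{-e^{-r}\Zfull(N,T)}]\bigr).
\]

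\textbf{Step 2: express the Laplace transform as a Fredholm determinant.} I would use the known formula (Borodin--Corwin--Remenik) $\E[e^{-u\Zfull(N,T)}]=\det(I+K_u)_{L^2(\mathcal C)}$. The kernel has the form
\[
K_u(v,v')=\frac{1}{2\pi i}\int \frac{\pi}{\sin(\pi(v-w))}\,\frac{e^{G(v)-G(w)}}{w-v'}\,dw,
\]
where, up to the $u$-dependence, $G(z)=N\log\Gamma(2\alpha-z)-T\log\Gamma(z)+z\log u$ (or the analogous expression for the chosen normalization). Since $|1-\det(I+K)|\le \|K\|_1e^{\|K\|_1+1}$, it then suffices to bound the trace norm of $K_u$, for instance by factoring it as a product of two Hilbert--Schmidt operators.

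\textbf{Step 3: steepest descent.} The function $G$ has a double critical point at $\zeta=g^{-1}(T/N)$ precisely when $\log u=Nf(T/N)$; this is the origin of the centering $Nf(T/N)$. Because $T/N\in[\xi,1]$, $\zeta$ stays in a compact subinterval of $(0,\alpha]$. Hence all Taylor coefficients of $G$ near $\zeta$, in particular the nonvanishing third derivative $\Psi_2$-combination of order $N$, are uniformly controlled.

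With $\log u=Nf(T/N)-xN^{1/3}$, I would deform the $v$-contour and $w$-contour so that they pass to the appropriate side of $\zeta$ at distance of order $x^{1/2}N^{-1/3}$. The cubic local behaviour then yields a factor $e^{-cx^{3/2}}$ in the Hilbert--Schmidt norms. Away from a neighbourhood of $\zeta$, steep descent and ascent properties of $\mathrm{Re}\,G$ along the chosen contours give contributions of order $e^{-cN}$. This produces the bound
\[
C_1e^{-c_1N}+C_2e^{-c_2x^{3/2}}
\]
for $x\le \varepsilon N^{2/3}$. For $x\ge\varepsilon N^{2/3}$, I would instead use a crude moment or Markov bound, for example from $\E[\Zfull^{k}]$ for a small fixed positive moment. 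Since then the deviation is linear in $N$, this gives a bound of order $e^{-cN}$, which is absorbed into the first term.

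\textbf{Main obstacle.} The hardest step is constructing explicit steep-descent contours whose properties hold uniformly for all $T/N\in[\xi,1]$ and for all $x$ in the moderate range. This must be compatible with the poles of $\Gamma$ and of $1/\sin$, and must also control the tails of the contours at infinity. I would first build these contours at $T/N=1$, where the geometry is symmetric about $\alpha$, and then argue by continuity and compactness in the slope parameter.
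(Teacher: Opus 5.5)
Your plan follows the route behind the result the paper uses. The paper does not prove this theorem itself; it quotes \cite[Theorem~1.7]{barraquand2021fluctuations}, which is exactly a steepest-descent analysis of the Fredholm determinant for $\E[e^{-u\Zfull(N,T)}]$. The following parts of your proposal are correct:
\begin{itemize}
\item the reduction to the Laplace transform;
\item the double critical point $\zeta=g^{-1}(T/N)$ and the centering $\log u=Nf(T/N)$, obtained from $G''(\zeta)=0$ and $G'(\zeta)=0$;
\item the contour shift of size $\sqrt{x}N^{-1/3}$ producing $e^{-cx^{3/2}}$.
\end{itemize}
The uniform construction of steep-descent contours is precisely the technical content of that reference. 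You rightly single it out as the main obstacle, but your proposal only sketches it.

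One step fails as written: the treatment of $x\ge\varepsilon N^{2/3}$ by a crude moment bound. The only crude bound at hand is the subadditive one, $\E[\Zfull(N,T)^\lambda]\le\binom{N+T-2}{N-1}\bigl(\Gamma(2\alpha-\lambda)/\Gamma(2\alpha)\bigr)^{N+T-1}$ for $\lambda<\min\{1,2\alpha\}$, as in Proposition~\ref{prop:thin-rect}. With it, Markov only controls $\log\Zfull$ above $\lambda^{-1}\log\binom{N+T-2}{N-1}+\lambda^{-1}(N+T-1)\log\bigl(\Gamma(2\alpha-\lambda)/\Gamma(2\alpha)\bigr)$. For slopes in $[\xi,1]$ the entropy term is of order $N$. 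This threshold therefore stays above the quenched value $-Nf(T/N)$ by a fixed multiple of $N$:
\begin{itemize}
\item it blows up as $\lambda\downarrow 0$;
\item for $\lambda$ bounded below it is an annealed-type quantity, e.g.\ exactly $\log\E[\Zfull]$ at $\lambda=1$ when $2\alpha>1$, which exceeds the quenched free energy at order $N$.
\end{itemize}
So it cannot give $e^{-cN}$ for a deviation $\varepsilon N$ with the small $\varepsilon$ that your Taylor-expansion regime forces. Closing that gap would require sharp asymptotics of $\E[\Zfull^\lambda]$ as $\lambda\downarrow0$, which is not a crude input.

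The step is, however, unnecessary. The probability is nonincreasing in $x$, so for $x\ge\varepsilon N^{2/3}$ it is at most its value at $x=\varepsilon N^{2/3}$. That value is at most $C_1e^{-c_1N}+C_2e^{-c_2\varepsilon^{3/2}N}$, which is absorbed into the $e^{-c_1N}$ term after adjusting constants. Equivalently, you can cap the contour shift at a fixed distance from $\zeta$.
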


We now turn to the complementary {thin-rectangle} regime $T/N\ll 1$, where a simpler estimate suffices.
\begin{proposition}\label{prop:thin-rect}
    For any $\varepsilon > 0$, there exists a $\xi \in (0,1)$ and positive constants $N_0, c$ such that for all $N \geq N_0$ satisfying $T/N \in (0, \xi]$ and all $x \geq 0$, we have
    \begin{equation}\label{eq:thin_rect}
        \Prob\left(\log \Zfull(N,T) + (N+T-1)\Psi_0(2\alpha-\varepsilon)\geq x \right) \leq e^{-cx}.
    \end{equation}  
\end{proposition}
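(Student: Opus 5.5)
The plan is a first-moment (Markov/Chernoff) bound with an exponential tilt. First fix the choice of $\xi$: since $\Psi_0$ is increasing, $\Psi_0(2\alpha-\varepsilon)<\Psi_0(2\alpha-\varepsilon/2)$, and we will choose $\xi$ small in terms of $\varepsilon$. For any $\lambda\in(0,2\alpha)$, $\E[\omega^\lambda]=\Gamma(2\alpha-\lambda)/\Gamma(2\alpha)$ for $\omega\sim\mathrm{Gamma}^{-1}(2\alpha)$. Since $\Zfull(N,T)$ is a sum of $\binom{N+T-2}{T-1}$ path products, each with $N+T-1$ independent weights, subadditivity of $u\mapsto u^\lambda$ for $\lambda\le 1$ gives
\[
\E\bigl[\Zfull(N,T)^\lambda\bigr]\le \binom{N+T-2}{T-1}\Bigl(\frac{\Gamma(2\alpha-\lambda)}{\Gamma(2\alpha)}\Bigr)^{N+T-1}.
\]
If $\lambda>1$ is needed, use Jensen or Hölder on the path sum instead. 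The factor $(\binom{N+T-2}{T-1})^{\lambda}$ only changes entropy constants, so the argument is unaffected.

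By Markov's inequality,
\[
\Prob\bigl(\log \Zfull+(N+T-1)\Psi_0(2\alpha-\varepsilon)\ge x\bigr)\le e^{-\lambda x}\binom{N+T-2}{T-1}\exp\Bigl((N+T-1)\bigl[\log\Gamma(2\alpha-\lambda)-\log\Gamma(2\alpha)-\lambda\Psi_0(2\alpha-\varepsilon)\bigr]\Bigr).
\]

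The key step is to choose $\lambda=\lambda(\varepsilon)$ small so that the bracket is strictly negative, with a quantitative gap. By convexity of $\log\Gamma$,
\[
\log\Gamma(2\alpha-\lambda)-\log\Gamma(2\alpha)\le -\lambda\Psi_0(2\alpha-\lambda).
\]
Taking $\lambda=\varepsilon/2$, the bracket is at most $-\lambda\bigl[\Psi_0(2\alpha-\varepsilon/2)-\Psi_0(2\alpha-\varepsilon)\bigr]=:-\eta<0$, where $\eta$ depends only on $\alpha$ and $\varepsilon$.

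Next control the entropy. For $T\le \xi N$,
\[
\log\binom{N+T-2}{T-1}\le (N+T)H\Bigl(\frac{T}{N+T}\Bigr)\le (N+T)H(\xi),
\]
with $H$ the binary entropy, which tends to $0$ as $\xi\to0$. Choose $\xi$ so that $H(\xi)<\eta/2$. The whole bound is then at most $e^{-\lambda x}e^{-(N+T-1)\eta/2+O(1)}\le e^{-\lambda x}$ for $N\ge N_0$, which gives \eqref{eq:thin_rect} with $c=\lambda$.

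The main obstacle, mild here, is making sure the entropy term is beaten uniformly in $T/N\in(0,\xi]$, including $T=1$. This holds because both the entropy and the energy gain scale with $N+T-1\ge N$. The argument also relies on the convexity inequality on $\log\Gamma$ producing a gap strictly governed by $\varepsilon$.
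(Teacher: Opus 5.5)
Your proposal is correct and follows essentially the same route as the paper: a fractional-moment bound from subadditivity of $u\mapsto u^\lambda$, then Markov's inequality, then a choice of $\lambda$ small in terms of $\varepsilon$ to create an energy gap, and finally $\xi$ small so that the binomial entropy is beaten. The only differences are cosmetic: you get the gap explicitly from convexity of $\log\Gamma$ with $\lambda=\varepsilon/2$ (capping at $\lambda=1$ would avoid your Jensen/H\"older detour), you bound the entropy by $H(\xi)$ rather than by Stirling, and the bracket in your displayed Markov bound should read $+\lambda\Psi_0(2\alpha-\varepsilon)$ rather than $-\lambda\Psi_0(2\alpha-\varepsilon)$, which is the sign your subsequent estimate actually uses.
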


\begin{proof}
Recall that for $\lambda \in (0,1)$, we have
\begin{equation}\label{eq:ineq}
\Zfull(N,T)^\lambda =\left( \sum_{\pi \in \Pi[(1,1) \rightarrow (N,T)]} \prod_{v \in \pi} w_v\right)^\lambda \leq \sum_{\pi \in \Pi[(1,1) \rightarrow (N,T)]} \prod_{v \in \pi} w_v^\lambda.
\end{equation}
Recall that if $w \sim \mathrm{Gamma}^{-1}(\beta)$ and $\beta-\lambda > 0$,
\[
\E[w^\lambda] = \frac{\Gamma(\beta - \lambda)}{\Gamma(\beta)}.
\] 
For $\lambda < \min\{1, 2\alpha\}$, we take expectation on both sides of \eqref{eq:ineq} and use independence along each path
\[
\E[\Zfull(N,T)^\lambda] \leq \binom{N+T-2}{N-1}\left( \frac{\Gamma(2\alpha -\lambda)}{\Gamma(2\alpha)}\right)^{N+T-1}.
\]
By Markov, we know that
\[
\begin{split}
    \Prob\left( \log \Zfull(N,T) \geq u \right)\leq \exp\left(-\lambda u + \log \binom{N+T-2}{N-1}+ (N+T-1)\log \left(\frac{\Gamma(2\alpha - \lambda)}{\Gamma(2\alpha)} \right) \right).
\end{split}
\]
Thus,
\[
\begin{split}
    &\Prob\left( \log \Zfull(N,T) - \frac{1}{\lambda} \log \binom{N+T-2}{N-1}-\frac{(N+T-1)}{\lambda} \log \left(\frac{\Gamma(2\alpha - \lambda)}{\Gamma(2\alpha)} \right) \geq x \right) \leq e^{-\lambda x}.
\end{split}
\]
Notice that for $\lambda$ small enough, we have
\[
\frac{1}{\lambda}\log\frac{\Gamma(2\alpha - \lambda)}{\Gamma(2\alpha)} \approx -\Psi_0(2\alpha).
\]
Recall that $-\Psi_0$ is a strictly decreasing function. Let $L = -\Psi_0(2\alpha-\varepsilon) + \Psi_0(2\alpha) > 0$. Let us choose $\lambda$ small such that 
\[
\left|\frac{1}{\lambda}\log\frac{\Gamma(2\alpha - \lambda)}{\Gamma(2\alpha)} + \Psi_0(2\alpha)\right| < \frac{L}{2}.
\]
Thus,
\[
\begin{aligned}
 -\frac{(N+T-1)}{\lambda}\log\frac{\Gamma(2\alpha - \lambda)}{\Gamma(2\alpha)} &\geq (N+T-1)\Psi_0(2\alpha) - (N+T-1)L/2\\
&= (N+T-1)\Psi_0(2\alpha-\varepsilon) + (N+T-1)L/2
\end{aligned}
\]
By Stirling's formula, we see that
\[
\log \binom{N+T-2}{N-1} =\left( \left(1+\frac{T}{N}\right)\log \left(1+\frac{T}{N}\right) -  \left(\frac{T}{N}\right)\log\left(\frac{T}{N}\right) \right) N + \mathcal{O}(\log N)
\]
Let us choose $\xi$ small enough such that 
\[
(1+\xi)\log(1+\xi)-\xi\log(\xi) < L\lambda/2.
\]
Thus, 
\[
- \frac{1}{\lambda} \log \binom{N+T-2}{N-1}-\frac{(N+T-1)}{\lambda} \log \left(\frac{\Gamma(2\alpha - \lambda)}{\Gamma(2\alpha)} \right) \geq (N+T-1)\Psi_0(2\alpha-\varepsilon).
\] 
We can then conclude
\[
\begin{split}
&\Prob\left( \log \Zfull(N,T) +(N+T-1)\Psi_0(2\alpha-\varepsilon) \geq x \right)\\
    &\leq \Prob\left( \log \Zfull(N,T) - \frac{1}{\lambda} \log \binom{N+T-2}{N-1} -\frac{(N+T-1)}{\lambda} \log \left(\frac{\Gamma(2\alpha - \lambda)}{\Gamma(2\alpha)} \right) \geq x \right)\\
    &\leq e^{-\lambda x}.
\end{split}
\]
\end{proof}

\subsection{Inequalities of shape function}
\begin{lemma}\label{lem:shape_linear_bound}
Fix $\alpha,\kappa>0$ and recall
\[
g(\zeta)=\frac{\Psi_1(2\alpha-\zeta)}{\Psi_1(\zeta)},\qquad 
f(x)=x\,\Psi_0(g^{-1}(x))+\Psi_0\bigl(2\alpha-g^{-1}(x)\bigr).
\]
Then there exist some $N_0,C_0 > 0$ depending on $\alpha,\kappa$ such that for any $N \geq N_0$, $T/N\in(0,1 - \kappa N^{-1/3+\delta}]$, and every $k\in\{1,\dots,T\}$,
\begin{equation}\label{eq:strict_shape_ineq}
-Nf\left( \frac{T}N\right)\;\geq \;-Nf\!\left(\frac{T-k}{N}\right)-k\,\Psi_0(\alpha) + C_0kN^{-1/3+\delta}.
\end{equation}
\end{lemma}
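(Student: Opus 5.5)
The plan is to view $u\mapsto -Nf(u/N)$ as a function of a continuous variable $u\in[0,T]$ and compute its derivative exactly. The difference in \eqref{eq:strict_shape_ineq} then becomes an integral of a monotone integrand over an interval of length $k$. The key computation is that $f'(x)=\Psi_0(g^{-1}(x))$ for $x>0$. To see this, write $\zeta=g^{-1}(x)$, so that $x\Psi_1(\zeta)=\Psi_1(2\alpha-\zeta)$. Differentiating the definition of $f$ gives
\[
f'(x)=\Psi_0(\zeta)+\bigl[x\Psi_1(\zeta)-\Psi_1(2\alpha-\zeta)\bigr]\,(g^{-1})'(x)=\Psi_0(g^{-1}(x)),
\]
because the bracket vanishes by the defining relation of $\zeta$. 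This is the envelope-type identity underlying the variational formula for the shape function.

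We must also handle the endpoint $k=T$, where the argument $(T-k)/N$ equals $0$. As $x\downarrow 0$ we have $\zeta=g^{-1}(x)\to 0$ with $x\sim \Psi_1(2\alpha)\zeta^2$, while $\Psi_0(\zeta)\sim -1/\zeta$. Hence $x\Psi_0(\zeta)\to 0$, so $f$ extends continuously to $[0,\infty)$ with $f(0)=\Psi_0(2\alpha)$. The fundamental theorem of calculus, applied on $[\eta,T]$ and then letting $\eta\downarrow 0$, therefore gives for every $k\in\{1,\dots,T\}$
\[
-Nf\!\left(\tfrac{T}{N}\right)+Nf\!\left(\tfrac{T-k}{N}\right)=\int_{T-k}^{T}-\Psi_0\!\left(g^{-1}\!\left(\tfrac{u}{N}\right)\right)du .
\]

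Next comes monotonicity. Both $g^{-1}$ and $\Psi_0$ are increasing, so the integrand is decreasing in $u$. Since $u/N\le T/N\le 1-\kappa N^{-1/3+\delta}$, we get $g^{-1}(u/N)\le \zeta_N:=g^{-1}(1-\kappa N^{-1/3+\delta})$. By Proposition~\ref{prop:slope}, for $N\ge N_0$ (so that $\kappa N^{-1/3+\delta}$ is small), using $C_1>0$ (true since $g$ is increasing) and $g^{-1}(1)=\alpha$, we have
\[
\zeta_N\le \alpha-C_1\kappa N^{-1/3+\delta}+C_2\kappa^2N^{-2/3+2\delta}\le \alpha-\tfrac{C_1\kappa}{2}N^{-1/3+\delta}.
\]
Here we implicitly use $\delta<1/3$, which is the only regime where the constraint on $T/N$ is meaningful.

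Finally, by the mean value theorem and the fact that $\Psi_1$ is positive and decreasing, $\Psi_1(\xi)\ge\Psi_1(\alpha)$ for all $\xi\le\alpha$. This gives
\[
-\Psi_0(\zeta_N)\ge -\Psi_0(\alpha)+\Psi_1(\alpha)\,\tfrac{C_1\kappa}{2}N^{-1/3+\delta}.
\]
Hence the integrand is bounded below by this quantity on all of $[T-k,T]$. Integrating over an interval of length $k$ yields \eqref{eq:strict_shape_ineq} with $C_0=\Psi_1(\alpha)C_1\kappa/2$.

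The main point needing care is the derivative identity $f'=\Psi_0\circ g^{-1}$, together with the boundary behavior at $0$ that justifies integrating down to $u=0$ when $k=T$. Once these are in place, the rest is monotonicity plus the linearization of $g^{-1}$ near $1$ from Proposition~\ref{prop:slope}.
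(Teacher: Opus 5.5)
Your proof is correct and follows essentially the same route as the paper: the envelope identity $f'=\Psi_0\circ g^{-1}$, then the bound $g^{-1}(T/N)\le \alpha - cN^{-1/3+\delta}$ from Proposition~\ref{prop:slope}, combined with concavity of $\Psi_0$ (you integrate over $[T-k,T]$ where the paper applies the mean value theorem). Your treatment of the endpoint $k=T$, where you extend $f$ continuously to $0$ with $f(0)=\Psi_0(2\alpha)$, fills a small point that the paper's mean-value-theorem step passes over silently.
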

\begin{proof}
Write $\zeta(x)=g^{-1}(x)$. Differentiating
\[
f(x)=x\,\Psi_0(\zeta(x))+\Psi_0(2\alpha-\zeta(x))
\]
gives
\[
f'(x)=\Psi_0(\zeta(x))+\Bigl(x\Psi_1(\zeta(x))-\Psi_1(2\alpha-\zeta(x))\Bigr)\zeta'(x).
\]
Since $x=g(\zeta(x))=\Psi_1(2\alpha-\zeta(x))/\Psi_1(\zeta(x))$, the bracket vanishes and hence
\begin{equation}\label{eq:fprime}
f'(x)=\Psi_0(\zeta(x))=\Psi_0(g^{-1}(x)).
\end{equation}
Recall that $g^{-1}$ is strictly increasing on $(0,2\alpha)$ and $\Psi_0$ is strictly increasing on $(0,\infty)$. Thus,
\[
f'(x)=\Psi_0(g^{-1}(x))<\Psi_0(g^{-1}(1)) = \Psi_0(\alpha),\qquad x\in(0,1).
\]
By the mean value theorem, there exists
$s\in(\frac{T-k}{N},\frac{T}{N})\subset(0,1-\kappa N^{-1/3+\delta}]$ such that
\[
f\left(\frac{T}{N}\right)-f\left(\frac{T-k}{N}\right)=f'(s)\frac{k}{N}
\]
Notice that 
\[
\alpha - g^{-1}(s) \geq \alpha- g^{-1}\left(1-\kappa N^{-1/3+\delta}\right) \geq C_1N^{-1/3+\delta}
\]
for some $C_1 > 0$ by Proposition \ref{prop:slope}. Because $\Psi_0$ is strictly concave on $(0, \infty)$, we have
\[
\Psi_0(\alpha) - \Psi_0(g^{-1}(s)) \geq C_1\Psi_1(\alpha)N^{-1/3+\delta}.
\]
Lastly, we have
\[
\frac{k}{N}\Psi_0(\alpha) - f\left(\frac{T}{N}\right)+f\left(\frac{T-k}{N}\right) \geq \frac{k}{N}C_1\Psi_1(\alpha)N^{-1/3+\delta}.
\]
Multiplying $N$ on both sides yields the desired inequality.
\end{proof}

\begin{lemma}\label{lem:shape_rect_bound}
Fix $\alpha, \kappa > 0$. There exists some $N_0, C_0 > 0$ depending on $\alpha, \kappa$ such that for any $N \geq N_0$, $T/N\in(0,1 - \kappa N^{-1/3+\delta}]$, and every $k\in\{1,\dots,T\}$, we have
\begin{equation}\label{eq:strict_shape_ineq}
-Nf\left( \frac{T}N\right)\;\geq \;-(N+T-k)\Psi_0(3\alpha/2)-k\,\Psi_0(\alpha) + C_0N.
\end{equation}
\end{lemma}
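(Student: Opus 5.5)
The plan is to reduce the inequality to a one-variable statement about the convex function $f$ and then check it at the endpoints. Set $x=T/N$. The right-hand side of the claimed inequality is affine in $k$, with $k$-coefficient $\Psi_0(3\alpha/2)-\Psi_0(\alpha)>0$ because $\Psi_0$ is strictly increasing. So over $k\in\{1,\dots,T\}$ the right-hand side is largest at $k=T$, and it suffices to treat that case. Dividing by $N$, the lemma then follows from
\[
h(x):=-f(x)+\Psi_0(3\alpha/2)+x\,\Psi_0(\alpha)\;\ge\;C_0
\qquad\text{for all } x\in\bigl(0,1-\kappa N^{-1/3+\delta}\bigr].
\]

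For the structure of $h$ I will use \eqref{eq:fprime} from the proof of Lemma~\ref{lem:shape_linear_bound}, namely $f'(x)=\Psi_0(g^{-1}(x))$. Since $g^{-1}$ and $\Psi_0$ are both increasing, $f'$ is increasing, so $f$ is convex and $h$ is concave on $(0,1]$. A concave function on an interval attains its infimum at the endpoints. Moreover, the $N$-dependent upper endpoint $1-\kappa N^{-1/3+\delta}$ lies inside $(0,1]$, so bounding $h$ on all of $(0,1]$ gives a constant $C_0$ that does not depend on $N$. It therefore suffices to bound $h(1)$ and $\liminf_{x\downarrow0}h(x)$ from below.

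At $x=1$ we have $g^{-1}(1)=\alpha$, so $f(1)=2\Psi_0(\alpha)$ and $h(1)=\Psi_0(3\alpha/2)-\Psi_0(\alpha)>0$.

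At $x\downarrow0$, write $\zeta=g^{-1}(x)\to0$. Near zero, $\Psi_1(\zeta)\sim\zeta^{-2}$, which gives $x\sim\Psi_1(2\alpha)\zeta^2$. Also $\Psi_0(\zeta)\sim-1/\zeta$, so $x\Psi_0(\zeta)=O(\zeta)\to0$. Hence $f(0+)=\Psi_0(2\alpha)$, and $h(0+)=\Psi_0(3\alpha/2)-\Psi_0(2\alpha)$.

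This endpoint is the main obstacle, and as the statement is worded the plan breaks down here. Because $\Psi_0$ is strictly increasing, $h(0+)=\Psi_0(3\alpha/2)-\Psi_0(2\alpha)<0$. So the inequality at $k=T$ fails for $T/N$ small enough, depending only on $\alpha$, and the lemma cannot be proved on the full range $(0,1-\kappa N^{-1/3+\delta}]$.

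What the concavity argument does prove is the following. Let $x_0\in(0,1)$ be the unique zero of $h$. Then for every $\xi>x_0$ the lemma holds on $T/N\in[\xi,1-\kappa N^{-1/3+\delta}]$, with $C_0=\min\{h(\xi),h(1)\}>0$. This is the range in which I would invoke the lemma.

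The remaining range $T/N\le\xi$ would be handled separately. There I would not use the shape function at all and would instead rely on Proposition~\ref{prop:thin-rect}. That proposition controls the free energy in thin rectangles directly through $\Psi_0(2\alpha-\varepsilon)$, and $\Psi_0(2\alpha-\varepsilon)$ can be placed strictly above $\Psi_0(3\alpha/2)$. With that choice the comparison needed in the proof of Proposition~\ref{prop:inhomo_full_space} goes through without this lemma.
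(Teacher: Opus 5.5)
Your reduction to $k=T$ is correct, and so is your counterexample, so the lemma is false as stated. The right-hand side is increasing in $k$, and $h(0+)=\Psi_0(3\alpha/2)-\Psi_0(2\alpha)<0$. Already $T=k=1$ with $N\to\infty$ (admissible once $\delta<1/3$) violates the inequality for every $C_0\ge 0$.

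The paper's proof breaks at the step $(N+T)F(g^{-1}(T/N))\ge (N+T)F(\alpha)-C_2(N+T)N^{-2/3+2\delta}$. Propositions~\ref{prop:slope}, \ref{prop:peak} and \ref{prop:shape} give a \emph{lower} bound on the gap $F(\alpha)-F(g^{-1}(T/N))$. That is an \emph{upper} bound on $F(g^{-1}(T/N))$, the opposite of what is used. A lower bound of the written form would require $1-T/N=O(N^{-1/3+\delta})$. For $T/N\to0$ one has $F(g^{-1}(T/N))\to-\Psi_0(2\alpha)$, a gap $\Psi_0(2\alpha)-\Psi_0(\alpha)$ that exceeds the available room $\Psi_0(3\alpha/2)-\Psi_0(\alpha)$. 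Your concavity argument via $f'=\Psi_0\circ g^{-1}$ gives the correct restricted statement, $T/N\in[\xi,1-\kappa N^{-1/3+\delta}]$ with $\xi>x_0$. It does not even need the $\kappa N^{-1/3+\delta}$ margin.

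Your last paragraph does not hold up. First, in the proof of Proposition~\ref{prop:inhomo_full_space} the lemma is invoked whenever the remaining rectangle from $(2,k)$ to $(N,T)$ is thin, $T-k\le \xi N$. That covers every $k$ when $T/N\le\xi$, but also $k$ near $T$ for moderate $T/N$.

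For $T/N\ge\xi_0$ with $\xi_0>0$ fixed, comparing with $\Psi_0(2\alpha-\varepsilon)$ instead of $\Psi_0(3\alpha/2)$ does repair that step. However, the right condition is not $\Psi_0(2\alpha-\varepsilon)>\Psi_0(3\alpha/2)$. Set $c(x):=\Psi_0(2\alpha)-f(x)+x\Psi_0(\alpha)=\int_0^x\bigl(\Psi_0(\alpha)-\Psi_0(g^{-1}(s))\bigr)\,ds$, which is positive and increasing. The binding case $k=T$ then requires $\Psi_0(2\alpha)-\Psi_0(2\alpha-\varepsilon)<c(\xi_0)$. So $\varepsilon$, and with it the threshold $\xi(\varepsilon)$ of Proposition~\ref{prop:thin-rect}, must depend on $\xi_0$.

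Second, no choice reaches $T/N\to 0$. Since $c(x)\sim 2\sqrt{\Psi_1(2\alpha)x}$, the centering $-Nf(T/N)$ exceeds $-N\Psi_0(2\alpha)$ by only about $2\sqrt{\Psi_1(2\alpha)NT}$. Proposition~\ref{prop:thin-rect} exceeds it by about $\varepsilon\Psi_1(2\alpha)N$, which is worse by order $N$ once $T/N\lesssim\varepsilon^2$.

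In fact the estimate you want to rescue is false there. For $T=1$, $\log Z_\theta(N,1)$ is a sum of $N$ independent log-inverse-gamma variables. Hence $\log Z_\theta(N,1)+Nf(1/N)=\sqrt{\Psi_1(2\alpha)N}\,(G_N-2)+O_{\Prob}(1)$ with $G_N$ asymptotically standard Gaussian. With $x=N^{1/12}$, the probability of exceeding $xN^{1/3}$ tends to $\Prob(G\ge 2)>0$, while the bound claimed in Proposition~\ref{prop:inhomo_full_space} tends to $0$. The same computation applies to $Z^{\mathrm{half}}(N,1)$ in Proposition~\ref{prop:uniform_upper_tail}.

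So the correct outcome is a restricted-slope version ($T/N\ge\xi_0$) of this lemma and of those two propositions, with the small-slope regime handled separately where they are applied. Proposition~\ref{prop:thin-rect} does not close the gap.
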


\begin{proof}

Recall that $-Nf(T/N) = (N+T)F(g^{-1}(T/N))$ where 
\[
F\left(\zeta \right) = -  \frac{\Psi_1(2\alpha - \zeta)}{\Psi_1(\zeta) + \Psi_1(2\alpha - \zeta)}\Psi_0(\zeta) - \frac{\Psi_1(\zeta)}{\Psi_1(\zeta) + \Psi_1(2\alpha-\zeta)}\Psi_0(2\alpha - \zeta).
\]
By Proposition \ref{prop:slope}, we know that
\[
\alpha- g^{-1}\left(\frac{T}{N}\right) \geq C_1\kappa N^{-1/3 + \delta}
\]
for some $C_1 > 0$. Thus, by Proposition \ref{prop:shape}, we have
\[
\left|F(\alpha) - F\left( g^{-1}\left(\frac{T}{N} \right)\right) \right| \geq C_2 N^{-2/3 + 2\delta}
\]
for some $C_2 > 0$. Since $F(\alpha) = -\Psi_0(\alpha)$,
\[
\begin{aligned}
&(N+T)F\left( g^{-1}\left(\frac{T}{N} \right)\right) \geq (N+T)F(\alpha) - C_2 (N+T) N^{-2/3 + 2\delta}\\
&= -k\Psi_0(\alpha) - (N+T-k)\Psi_0(3\alpha/2) +(N+T-k)(\Psi_0(3\alpha/2)-\Psi_0(\alpha))- C_2 (N+T) N^{-2/3 + 2\delta}.
\end{aligned}
\]
Because $\Psi_0$ is strictly increasing on $(0, \infty)$, $\Psi_0(3\alpha/2) - \Psi_0(\alpha) >0$ and we can choose $N_0$ large enough so that $C_2 (N+T) N^{-2/3 + 2\delta}$ gets absorbed into $(N+T-k)(\Psi_0(3\alpha/2)-\Psi_0(\alpha))$.

\end{proof}

\subsection{Proof of Proposition \ref{prop:inhomo_full_space}}
\begin{proof}[Proof of Proposition \ref{prop:inhomo_full_space}]
By \cite[Lemma 13.1]{stat}, we know that $Z_0(N,T)$ stochastically dominates $Z_\theta(N,T)$ for all $\theta \geq 0$. Thus, it suffices to only prove the upper tail for $\theta=0$. Write 
\[
Z_0(N,T) = \sum_{k=1}^{T} \left( \prod_{j=1}^k W_{1,j} \right)\Zfull(2,k;N,T).
\]
Thus,
\[
\log Z_0(N,T) \leq \log T + \max_{1 \leq k\leq T}\left\{  \sum_{j=1}^k \log W_{1,j}+ \log\Zfull(2,k;N,T)\right\}.
\]
It suffices to prove for all $1 \leq k \leq T$,
\[
\Prob\left(\sum_{j=1}^k \log W_{1,j}+ \log\Zfull(2,k;N,T) + Nf(T/N) \geq xN^{1/3}\right) \leq Ce^{-c \min \{N^{2\delta}, N^{1/3}\}} + Ce^{-cx^2}
\]
because
\[
\begin{aligned}
    &\Prob\left(\log Z_0(N,T) + Nf\left(T/N\right) \geq xN^{1/3}\right) \\
    &\leq \Prob\left( \max_{1 \leq k\leq T}\left\{  \sum_{j=1}^k \log W_{1,j}+ \log\Zfull(2,k;N,T)\right\} + Nf\left(T/N\right) \geq xN^{1/3} - \log T\right)\\
    &\leq T \max_{1 \leq k\leq T}\Prob\left(\sum_{j=1}^k \log W_{1,j}+ \log\Zfull(2,k;N,T) + Nf\left(T/N\right) \geq xN^{1/3} - \log T\right)\\
    &\leq CNe^{-c \min \{N^{2\delta}, N^{1/3}\}} + CNe^{-cx^2}.
\end{aligned}
\]
Choose $\xi$ in Proposition \ref{prop:thin-rect} by setting $\varepsilon = \alpha/2$. We consider the first case where $k \geq T-\xi N$. We apply Lemma \ref{lem:shape_rect_bound}:
\[
\begin{aligned}
&\Prob\left(\sum_{j=1}^k \log W_{1,j}+ \log\Zfull(2,k;N,T) + Nf(T/N) \geq xN^{1/3}\right)\\
&\leq \Prob\left(\sum_{j=1}^k \log W_{1,j}+ k\Psi_0(\alpha) \geq C_0N\right) + \Prob \left(\log\Zfull(2,k;N,T) + (N+T-k)\Psi_0(3\alpha/2) \geq xN^{1/3}\right).
\end{aligned}
\]
Apply Bernstein inequality for subexponential random variables \cite[Theorem 2.9.1]{Vershynin_2018} and Proposition \ref{prop:thin-rect}, we see that the above is bounded by $Ce^{-cN^{1/3}}$.

Consider the second case where $1 \leq k \leq T - \xi N$. We apply Lemma \ref{lem:shape_linear_bound}:
\[
\begin{aligned}
&\Prob\left(\sum_{j=1}^k \log W_{1,j}+ \log\Zfull(2,k;N,T) + Nf(T/N) \geq xN^{1/3}\right)\\
&\leq \Prob\left(\sum_{j=1}^k \log W_{1,j}+ k\Psi_0(\alpha) \geq C_0kN^{-1/3+\delta} + \frac{xN^{1/3}}{2}\right) + \Prob \left(\log\Zfull(2,k;N,T) + Nf((T-k)/N) \geq \frac{xN^{1/3}}{2}\right).
\end{aligned}
\]
We apply Theorem \ref{thm:full-homogeneous} with the $\xi$ chosen above to the second term and bound it by $Ce^{-cx^{3/2}}$. For the first term, we want to apply Bernstein inequality. If $k \leq N^{2/3}$, then the first term is bounded by $Ce^{-cx^2}$. If $k \geq N^{2/3}$, then $kN^{-2/3+2\delta} \geq N^{2\delta}$ and the first term is bounded by $Ce^{-c\min\{N^{2\delta},N^{1/3+\delta}\}}$. Note that all the constants $C,c>0$ only depend on $\alpha$ and $\kappa$.

\end{proof}

\section{Acknowledgments}
The author sincerely thanks their advisor, Ivan Corwin, for constant support and guidance. The author is especially grateful to Sayan Das for suggesting this problem and for valuable discussion on the uniform exponential upper-tail bounds for the half-space free energy with general slope. The author also thanks Jiyue Zeng and Alan Zhao for valuable conversations. This research was partially supported by Ivan Corwin’s National Science Foundation grant DMS:2246576 and Simons Investigator in Mathematics award MPS-SIM-00929852.
\bibliographystyle{amsalpha}
\bibliography{bib.bib}

@article{basu2024temporal,
  title={Temporal correlation in the inverse-gamma polymer},
  author={Basu, Riddhipratim and Sepp{\"a}l{\"a}inen, Timo and Shen, Xiao},
  journal={Communications in Mathematical Physics},
  volume={405},
  number={7},
  pages={163},
  year={2024},
  publisher={Springer}
}

@book{Vershynin_2018, place={Cambridge}, series={Cambridge Series in Statistical and Probabilistic Mathematics}, title={High-Dimensional Probability: An Introduction with Applications in Data Science}, publisher={Cambridge University Press}, author={Vershynin, Roman}, year={2018}, collection={Cambridge Series in Statistical and Probabilistic Mathematics}}

@article{zhang2025convergence,
  title={Convergence from the log-gamma polymer to the directed landscape},
  author={Zhang, Xinyi},
  journal={arXiv preprint arXiv:2505.05685},
  year={2025}
}

@article{dauvergne2021scaling,
  title={The scaling limit of the longest increasing subsequence},
  author={Dauvergne, Duncan and Vir{\'a}g, B{\'a}lint},
  journal={arXiv preprint arXiv:2104.08210},
  year={2021}
}

@inproceedings{dauvergne2023uniform,
  title={Uniform convergence to the Airy line ensemble},
  author={Dauvergne, Duncan and Nica, Mihai and Vir{\'a}g, B{\'a}lint},
  booktitle={Annales de l'Institut Henri Poincare (B) Probabilites et statistiques},
  volume={59},
  number={4},
  pages={2220--2256},
  year={2023},
  organization={Institut Henri Poincar{\'e}}
}

@article{aggarwal2024scaling,
  title={Scaling limit of the colored ASEP and stochastic six-vertex models},
  author={Aggarwal, Amol and Corwin, Ivan and Hegde, Milind},
  journal={arXiv preprint arXiv:2403.01341},
  year={2024}
}

@article{barraquand2023identity,
  title={An identity in distribution between full-space and half-space log-gamma polymers},
  author={Barraquand, Guillaume and Wang, Shouda},
  journal={International Mathematics Research Notices},
  volume={2023},
  number={14},
  pages={11877--11929},
  year={2023},
  publisher={Oxford University Press}
}

@article{Aconvergence,
  title={Convergence of Half-Space Last Passage Percolation Away from the Boundary to the Directed Landscape},
  author={Zhang, Xinyi},
  journal={arXiv preprint arXiv:2602.17992},
  year={2026}
}

@article{stat,
  title={Stationary Log-Gamma Polymer in Half-Space},
  author={Zeng, Jiyue and Zhang, Xinyi},
  journal={arXiv preprint arXiv:2602.19500},
  year={2026}
}

@article{dauvergne2022directed,
  title={The directed landscape},
  author={Dauvergne, Duncan and Ortmann, Janosch and Vir{\'a}g, B{\'a}lint},
  journal={Acta Mathematica},
  volume={229},
  number={2},
  pages={201--285},
  year={2022},
  publisher={Lehigh University Bethlehem, Penn., USA}
}

@article{wu2023kpz,
  title={The KPZ equation and the directed landscape},
  author={Wu, Xuan},
  journal={arXiv preprint arXiv:2301.00547},
  year={2023}
}

@article{barraquand2021fluctuations,
  title={Fluctuations of the log-gamma polymer free energy with general parameters and slopes},
  author={Barraquand, Guillaume and Corwin, Ivan and Dimitrov, Evgeni},
  journal={Probability Theory and Related Fields},
  volume={181},
  number={1},
  pages={113--195},
  year={2021},
  publisher={Springer}
}
\end{document}